\newtheorem{thm}{Theorem}
\newtheorem{lem}{Lemma}
\newtheorem{prop}{Proposition}
\newtheorem{rem}{Remark}
\title{Barenblatt solutions for the time-fractional porous medium equation: approach via integral equations}
\author{
Josefa Caballero\thanks{Departamento de Matem\'aticas, Universidad de Las Palmas de Gran Canaria, Campus de Tafira Baja, $35017$ Las Palmas de Gran Canaria, Spain.}, \and
Hanna Okrasi{\'n}ska-P{\l}ociniczak\thanks{Department of Applied Mathematics, Wroclaw University of Environmental and Life Sciences, ul. C.K. Norwida 25, 50-275 Wroclaw, Poland}, \and 
{\L}ukasz P{\l}ociniczak\thanks{Faculty of Pure and Applied Mathematics, Wroclaw University of Science and Technology, Wyb. Wyspia\'nskiego 27, 50-370 Wroc{\l}aw, Poland, \underline{corresponding author:} \texttt{lukasz.plociniczak@pwr.edu.pl}}, \vspace{4pt}\and
Kishin  Sadarangani$^{\ast}$ 
}
\date{}
\begin{document}
\maketitle

\begin{abstract}
	This paper explores Barenblatt solutions of the time-fractional porous medium equation, characterized by a Caputo-type time derivative. Employing an integral equation approach, we rigorously prove the existence of these solutions and establish several fundamental properties, including upper and lower estimates, mass conservation, regularity, and monotonicity. To bridge theory and practice, we introduce a family of convergent numerical schemes specifically designed to compute the Barenblatt solutions, ensuring reliable and efficient approximations. The theoretical framework is enriched with various examples that illustrate the concepts and validate the effectiveness of the proposed numerical methods, enhancing the understanding and applicability of our results.
\end{abstract}

\textbf{Keywords:} porous medium equation, Caputo derivative, Barenblatt solution, numerical method, nonlinear integral equations

\section{Introduction}
The porous medium equation (PME) stands as a fundamental model in the analysis of nonlinear diffusion processes, with a legacy that goes back to Barenblatt seminal work in 1952 \cite{barenblatt11some} (see also \cite{zel1950towards}). Originally formulated to describe the flow of a gas through a porous medium, the PME captures the essence of nonlinear diffusion through a density-dependent diffusion coefficient that can vanish for low concentrations. This nonlinearity manifests itself in striking features such as the finite speed of propagation and the emergence of sharp interfaces, phenomena that distinguish it from classical linear diffusion models. These characteristics have made the PME a vital tool in various fields, including hydrology \cite{bear2013dynamics}, fluid dynamics \cite{oron1997long}, heat transfer \cite{nield2017heat}, and biology \cite{sherratt1990models}. A detailed account of the classical PME can be found in the monograph \cite{vazquez2007porous}. 

The advent of fractional calculus has ushered in a transformative extension of such models, enabling the description of anomalous diffusion processes where traditional integer-order derivatives fall short. The time-fractional porous medium equation integrates a fractional time derivative into the classical framework, accommodating memory effects and long-range interactions inherent in complex systems \cite{metzler2000random,meerschaert2019stochastic}. This generalization is particularly pertinent to applications in viscoelasticity \cite{mainardi2022fractional}, hydrology \cite{plociniczak2015analytical, plociniczak2014approximation, cushman2000fractional, benson2000application}, heat transfer \cite{povstenko2015fractional} and financial mathematics \cite{winkel2005electronic, scalas2000fractional} (for additional examples, see \cite{sun2018new}). In this work, we consider a nonlinear subdiffusion equation with fractional derivative, the time-fractional PME (see \cite{plociniczak2015analytical}). It has an interesting application in the description of various experiments measuring the imbibition properties of many new materials \cite{de2006water, de2006concentration, el2004neutron, ramos2008anomalous}. 

The following equation models moisture diffusion in porous media in which we account for the memory of the process \cite{li2019theory,plociniczak2015analytical}
\begin{equation}\label{eqn:MainEq0}
	\partial^\alpha_t u = \left(u^m u_x\right)_x, \quad x \in\mathbb{R}, \quad t > 0, \quad 0< \alpha < 1, \quad m > 0,
\end{equation}
where the diffusivity is chosen as a power-law which is a typical model of porous media. The nonlocal operator, the Caputo derivative, is defined on suitably chosen function spaces by (for more details see \cite{kubica2020time})
\begin{equation}\label{eqn:Caputo}
	\partial^\alpha_t u(x,t) = \frac{1}{\Gamma(1-\alpha)}\int_0^t (t-\tau)^{-\alpha} u_t(x,\tau)d\tau, \quad 0 < \alpha < 1.
\end{equation}
We note that, with a suitable integration by parts the above can be defined for functions of only $\beta$-H\"older regularity with $\beta > \alpha$ (see \cite{del2025note}). In addition, the Caputo derivative can be defined as $\partial_t^\alpha u(x,t) = I^{1-\alpha}_t u_t(x,t)$, where the fractional integral is defined by
\begin{equation}\label{eqn:FracInt}
	I^{\beta}_t u(x,t) = \frac{1}{\Gamma(\beta)}\int_0^t (t-\tau)^{\beta-1} u(x,\tau) d\tau, \quad \beta > 0.
\end{equation}
Frequently, we use the term \textit{classical solution} as a solution of any PDE with $\alpha = 1$.

Now, we would like to study an important problem of sudden mass release at $x = 0$. This is one of the most important early examples of problems for the classical nonlinear diffusion which possesses an exact solution discovered by Barenblatt \cite{barenblatt11some}, Zel'dovich and Kompaneets \cite{zel1950towards}, and shortly later by Pattle \cite{pattle1959diffusion}. This, in turn, gave rise to vigorous research and a number of important results on nonlinear diffusion in later years, from which we mention notable works by Aronson \cite{aronson1969regularity}, Gilding and Peletier \cite{gilding1976class}, Goncerzewicz \cite{gilding2000localization}, and the more application-oriented approach of Okrasi\'nski \cite{okrasinski1993approximate}. A very thorough account of the history of PME can be found in \cite{vazquez2007porous}. We assume that the total mass is conserved and the medium is initially dry
\begin{equation}\label{eqn:MassConservation0}
	\int_{-\infty}^\infty u(x,t) dx = 1, \quad u(x, 0) = 0 \text{ a.e.}
\end{equation} 
Note that this basically means that $u(x,0) = \delta(x)$ with $\delta$ being the Dirac distribution. For simplicity, the solution of \eqref{eqn:MainEq0} with the above condition will be called the \textit{Barenblatt solution} in the sequel. This function is very useful for inquiring properties of solutions of the PME in the general case. The striking feature of the Barenblatt solution is its compact support (which can be explicitly seen from the exact formula in \eqref{eqn:BarenblattExact}). This, in turn, implies a finite speed of wave propagation in the modeled process - a feature much needed in real-world scenarios. 

In more recent times, the PME has been generalized in various ways to account for nonlocal phenomena associated with different kinds of exotic media. As mentioned above, long-range forces can be modeled by the use of fractional Laplacian or similar nonlocal operators acting on the space variables. The fractional PME appeared in \cite{de2011fractional} in the specific case and in \cite{de2012general} in general, where the authors studied the existence, uniqueness, and regularity of the solutions. A different approach to a similar problem has been taken in \cite{caffarelli2011nonlinear, caffarelli2010asymptotic} roughly at the same time. The case of a bounded domain, which in the nonlocal space setting is not based only on a simple truncation, was treated in \cite{bonforte2015existence}. Moreover, the Barenblatt solution in the space-nonlocal case has been investigated in \cite{biler2011barenblatt,biler2015nonlocal} and \cite{huang2014explicit} in which even closed-form explicit solutions have been found. Some further research on compactly supported solutions and the finite speed of wave propagation has been carried out in \cite{stan2016finite}.

On the other hand, the memory effects of the corresponding physical diffusive process can be described by the time-fractional Caputo derivative \cite{metzler2000random, plociniczak2015analytical}. There are many works in the literature on the linear version of \eqref{eqn:MainEq0}, and the reader is invited to consult \cite{sakamoto2011initial, kubica2020time, kubica2018initial} for details. For the quasilinear case, the theory has been actively developed over the last few years. For example, strong global solvability has been obtained in \cite{zacher2012global, wittbold2021bounded}, while optimal decay estimates have been found in \cite{vergara2015optimal}. A general abstract approach based on fractional gradient flows in Hilbert spaces has been developed in \cite{akagi2019fractional} (some very recent developments can be found in \cite{akagi2025time}). Self-similar solutions of \eqref{eqn:MainEq0} in half-line (what models is the typical experimental setup for measuring the properties of the medium under moisture imbibition) have been investigated by one of the authors and his collaborators in \cite{plociniczak2014approximation} (approximate solutions),  \cite{plociniczak2018existence, lopez2024time} (existence and uniqueness) and \cite{plociniczak2019numerical, plociniczak2023linear} (numerical methods). Recently, a new line of investigations has been followed in which mathematicians are interested in the most general equations with both space- and time-nonlocal operators. For example, in \cite{allen2016parabolic} the regularity of a linear, but doubly nonlocal, parabolic equation has been investigated. These results were generalized in \cite{allen2017porous} for the nonlocal PME. In \cite{dipierro2019decay} optimal decay estimates have been found for a wide class of nonlinear problems. An approach based on viscosity solutions has been carried over in \cite{topp2017existence} and we also mention a recent work in which the existence and regularity of a general nonlocal diffusion equation are determined by the approach through numerical schemes \cite{del2023numerical}. Also, very recently, a version of doubly nonlocal PME has been considered in \cite{bonforte2024time}, where the authors proved the existence, uniqueness, regularity, comparison principle, and decay rates. Moreover, an interesting account of singular solutions has been given in \cite{chan2024singular}. 

The literature on numerical methods for solving differential equations with nonlocal operators is vast and we do not intend to give a full account. The reader is invited to consult \cite{li2019theory, li2015numerical} for a detailed exposition of the numerical methods used to approximate Caputo fractional derivatives (and other). Our approach in this paper is slightly different and is based on the methods used to solve integral equations (see \cite{linz1985analytical}).  

To our knowledge, the point-source problem for \eqref{eqn:MainEq0} was not studied in the literature in the non-linear case of $m>0$. For $m=0$, Barenblatt profile can be called the fundamental solution and can be explicitly given in terms of Wright's function \cite{gorenflo2000wright} (in the classical case it is, of course, the Gaussian). In the presence of spatial nonlocality, in this case modeled by the fractional Laplacian, similar explicit forms can be obtained with the use of other special functions \cite{mainardi2001fundamental, luchko1998scale}. These fundamental solutions do not have a compact support. In the quasilinear case \eqref{eqn:MainEq0} the solution is much less investigated, but we would like to mention a very interesting approach of providing probabilistic interpretation of the Barenblatt solutions given in \cite{de2020alternative}. As the classical example shows, these functions approach the Dirac delta as $t\rightarrow 0^+$. That is to say, the main PDE should be equipped with the measure initial datum and its solution should be interpreted in appropriate weak (or very weak) sense. We refer the reader to \cite{vazquez2007porous} for a thorough discussion of that topic in the classical case. However, the theory of the time-fractional version of a PME is still under very vigorous development, and in this paper, we refrain from moving in that direction because it would lead us too far from the main aim of our work. Some relevant works include \cite{zacher2012global}, where the authors consider the existence, uniqueness, and regularity of various versions of \eqref{eqn:MainEq0} with the initial datum being a function. 

The basic aim of this paper is to construct and investigate Barenblatt solutions of \eqref{eqn:MainEq0}. We show that this equation, analogously to the classical case, possesses a similarity solution which satisfies the mass conservation condition \eqref{eqn:MassConservation0}. This particular solution is then described by a nonlinear Volterra equation with a known kernel. Our analysis is then based on investigating various properties of this equation. We are able to find many of its features, such as regularity, monotonicity, symmetry, bounds, uniqueness (in certain sense), and asymptotic behavior. Moreover, for practical uses, we devise an efficient numerical scheme for computing the Barenblatt solution and prove its convergence to the positive solution. This is very important, since due to non-Lipschitzian nonlinearity, our equation always has a trivial solution. Our theory is then illustrated with various numerical examples that verify our findings. 

In the next section, we derive the main Volterra integral equation and thoroughly investigate it which leads to the construction of the Barenblatt solution of \eqref{eqn:MainEq0}. Then, in Section 3 we devise a numerical scheme based on the product integration method and prove its convergence.

\section{Barenblatt solution}
In this section, we first reduce our main problem to solving a nonlinear Volterra integral equation, then characterize its solution to finally build a Barenblatt solution of \eqref{eqn:MainEq0}. 

\subsection{Derivation of the integral equation}
Suppose that we look for solutions in the form
\begin{equation}\label{eqn:SelfSimilarSol}
	u(x,t) = t^{-a} U(z), \quad z := x t^{-b} \text{ for some } a,b > 0. 
\end{equation}
After substituting this form into the mass conservation \eqref{eqn:MassConservation0} we obtain
\begin{equation}
	1 = \int_{-\infty}^\infty u(x,t) dx = t^{-a} \int_{-\infty}^\infty U(x t^{-b}) dx = t^{b-a} \int_{-\infty}^\infty U(z) dz,
\end{equation}
where we change the variable into the self-similar. The above can only be satisfied when
\begin{equation}
	a = b. 
\end{equation}
Now, the space derivative transforms as
\begin{equation}\label{eqn:SpaceDerivative}
	\frac{\partial}{\partial x} = \frac{\partial z}{\partial x} \frac{\partial}{\partial z} = t^{-b} \frac{\partial}{\partial z}.
\end{equation}
Finally, we have to translate the Caputo derivative to a self-similar setting. For computational reasons, it is worth to note that for the vanishing initial condition as in \eqref{eqn:MassConservation0}, the Caputo derivative is equivalent to the Riemann-Liouville version defined by
\begin{equation}
	^{RL}\partial^\alpha_t u(x,t) = \frac{1}{\Gamma(1-\alpha)} \frac{\partial}{\partial t} \int_0^t (t-\tau)^{-\alpha} u(x,\tau) d\tau, \quad 0 < \alpha < 1.
\end{equation}
Therefore, with \eqref{eqn:SelfSimilarSol} and a change of variable $\tau = t \sigma$ we have
\begin{equation}
	\begin{split}
		^{RL}\partial^\alpha_t u(x,t) &= \frac{1}{\Gamma(1-\alpha)}\frac{\partial}{\partial t} \left(\int_0^t (t-\tau)^{-\alpha} \tau^{-a}U(x \tau^{-a}) d\tau\right) \\
		&= \frac{1}{\Gamma(1-\alpha)}\frac{\partial}{\partial t} \left(t^{1-a-\alpha}\int_0^1 (1-\sigma)^{-\alpha} \sigma^{-a}U(x t^{-a} \sigma^{-a}) d\sigma\right).
	\end{split}
\end{equation}
Now, by taking the derivative we obtain
\begin{equation}
	^{RL}\partial^\alpha_t u(x,t) = \frac{1}{\Gamma(1-\alpha)} \left[(1-a-\alpha) t^{-a-\alpha} + t^{1-a-\alpha} \frac{\partial}{\partial t}\right]\left(\int_0^1 (1-\sigma)^{-\alpha} \sigma^{-a}U(x t^{-a} \sigma^{-a}) d\sigma\right).
\end{equation}
But since $z = x t^{-a}$ we can use the chain rule to obtain
\begin{equation}
	\frac{\partial}{\partial t} = \frac{\partial z}{\partial t} \frac{\partial}{\partial z} = -a x t^{-a-1}\frac{\partial}{\partial z} = -a t^{-1} z \frac{\partial}{\partial z},  
\end{equation}
and, hence,
\begin{equation}\label{eqn:RLSS}
	^{RL}\partial^\alpha_t u(x,t) = \frac{t^{-a-\alpha}}{\Gamma(1-\alpha)} \left[(1-a-\alpha) - a z \frac{\partial}{\partial z}\right]\left(\int_0^1 (1-\sigma)^{-\alpha} \sigma^{-a}U(z \sigma^{-a}) d\sigma\right).
\end{equation}
Finally, using \eqref{eqn:SpaceDerivative} in our equation \eqref{eqn:MainEq0} the time variable will cancel only if $-2a - ma - a= -a - \alpha$, that is 
\begin{equation}
	a = b = \frac{\alpha}{2+m},
\end{equation} 
and an equation for $U$
\begin{equation}\label{eqn:SelfSimilarEq}
	\left(A - B z \frac{d}{dz}\right)F_{\alpha,m} U = \left(U^m U'\right)', \quad A = 1 - \alpha - \frac{\alpha}{2+m}, \quad B = \frac{\alpha}{2+m},
\end{equation}
with primes denoting the $z$-derivative and the \textit{Erd\'elyi-Kober}-type operator $F_{\alpha,m}$ is defined by
\begin{equation}\label{eqn:EK}
	F_{\alpha,m} U(z) = \frac{1}{\Gamma(1-\alpha)} \int_0^1 (1-\sigma)^{-\alpha} \sigma^{-\frac{\alpha}{2+m}} U(z \sigma^{-\frac{\alpha}{2+m}}) d\sigma.
\end{equation}
In addition, we have the mass conservation
\begin{equation}\label{eqn:MassConservation}
	\int_{-\infty}^\infty U(z) dz = 1.
\end{equation}
In the classical case, that is, when $\alpha \rightarrow 1^-$ such a problem has a solution, known as the Barenblatt solution (or Barenblatt-Zeldovich-Kompanyeets), which can be expressed explicitly as
\begin{equation}\label{eqn:BarenblattExact}
	U(z) = \left(D-\frac{z^2}{2(2+m)}\right)_+^\frac{1}{m}, \quad f_+(z) := \max\{f(z), 0\},
\end{equation}
where $D>0$ is chosen to satisfy the mass conservation \eqref{eqn:MassConservation} to be
\begin{equation}
	D = \left(\frac{\Gamma \left(\frac{3}{2}+\frac{1}{m}\right)}{\sqrt{2\pi(m+2)} \Gamma \left(1+\frac{1}{m}\right)}\right)^{\frac{2 m}{m+2}}.
\end{equation} 
Note that this is a compactly supported symmetric solution on $[-\sqrt{2D(2+m)},-\sqrt{2D(2+m)}]$, which is in striking contrast to the linear case $m=0$, where the solution is a Gaussian. Furthermore, by a straightforward calculation it follows that the Barenblatt solution is a continuous function $m^{-1}-$ H\"older. No such type of explicit solutions are known for the case $0<\alpha<1$. 

We would like to investigate the existence of Barenblatt solutions for \eqref{eqn:SelfSimilarEq} by using integral equations. It is natural to expect that for $0<\alpha<1$ the support of our solution is also compact, that is, there exists $z_0 > 0$ such that
\begin{equation}\label{eqn:CompactSupport}
	U(z) = 0 \quad \text{for} \quad |z| \geq z_0.
\end{equation}
Moreover, as is evident from \eqref{eqn:SelfSimilarEq} by substituting $z \mapsto -z$ our solution is symmetric, and it is sufficient to consider only half of the domain $z\in [-z_0,0]$ with half of the mass present there
\begin{equation}\label{eqn:MassConservationHalf}
	\int_{-z_0}^0 U(z) dz = \frac{1}{2}.
\end{equation}
Also, it is natural in the physical sense to assume that there is no flux through the interface of the water and the medium (this is true for similar problems, see \cite{plociniczak2018existence}), that is
\begin{equation}
	-U^m U' = 0 \quad \text{for} \quad |z| = z_0.
\end{equation}
We can now integrate the self-similar equation from $-z_0$ to $z$ to obtain
\begin{equation}\label{eqn:MainEqSSInt}
	- B z F_{\alpha, m} U + (A + B) \int_{-z_0}^z F_{\alpha, m} U(y)dy = U^m U'. 
\end{equation}
Noticing that $U^m U' = (m+1)^{-1} (U^{m+1})'$ we can integrate once more to obtain
\begin{equation}
	-B \int_{-z_0}^z w F_{\alpha,m}U(w) dw + (A+B)\int_{-z_0}^z \int_{-z_0}^w F_{\alpha, m} U(y) dy dw = \frac{1}{m+1} U(z)^{m+1},
\end{equation}
or, by Fubini's Theorem and taking the root
\begin{equation}\label{eqn:FixedPoint1}
	U(z) = \left((m+1)\int_{-z_0}^z \left((A+B)(z-w) - Bw\right)F_{\alpha, m} U(w) dw\right)^{\frac{1}{m+1}}, \quad -z_0 \leq z \leq 0,
\end{equation}
which is a fixed-point form of the equation \eqref{eqn:SelfSimilarEq} (and is well-defined for $U \geq 0$ since $z-w \geq 0$, $-w \geq 0$, and the kernel of $F_{\alpha,m}$ is positive). The above can also be simplified by invoking the definition of the E-K operator \eqref{eqn:EK} and noticing that $U(z) = 0$ for $z\leq -z_0$. For $-z_0 \leq z \leq 0$, we have 
\begin{equation}
	\begin{split}
		\int_{-z_0}^z&\left((A+B)(z-w) - Bw\right)F_{\alpha, m} U(w) dw \\
		&= \frac{1}{\Gamma(1-\alpha)} \int_{-z_0}^z \left((A+B)(z-w) - Bw\right) \int_{\left(-\frac{w}{z_0}\right)^{(2+m)/\alpha}}^1 (1-\sigma)^{-\alpha} \sigma^{-\frac{\alpha}{2+m}} U(w \sigma^{-\frac{\alpha}{2+m}}) d\sigma dw \\
		&= \frac{1}{\Gamma(1-\alpha)} \left(-\frac{2+m}{\alpha}\right) \int_{-z_0}^z \left((A+B)(z-w) - Bw\right) \int_{-z_0}^w \left(1-\left(\frac{w}{\tau}\right)^\frac{2+m}{\alpha}\right)^{-\alpha} w^{\frac{2+m}{\alpha}-1}\tau^{-\frac{2+m}{\alpha}}U(\tau) d\tau dw,
	\end{split}
\end{equation}
where we have used a natural change of the variable $\tau = w \sigma^{-\alpha/(2+m)}$. Now, by Fubini's Theorem, we have
\begin{equation}
	\begin{split}
		\int_{-z_0}^z&\left((A+B)(z-w) - Bw\right)F_{\alpha, m} U(w) dw \\
		&=-\frac{2+m}{\alpha\Gamma(1-\alpha)} \int_{-z_0}^z \left(\int_\tau^z \left((A+B)(z-w) - Bw\right) \left(1-\left(\frac{w}{\tau}\right)^\frac{2+m}{\alpha}\right)^{-\alpha} w^{\frac{2+m}{\alpha}-1} dw\right)\tau^{-\frac{2+m}{\alpha}} U(\tau) d\tau.
	\end{split}
\end{equation}
Finally, we can substitute the inner integral $\sigma = (w/\tau)^{(2+m)/\alpha}$ to obtain
\begin{equation}
	\begin{split}
		\int_{-z_0}^z&\left((A+B)(z-w) - Bw\right)F_{\alpha, m} U(w) dw \\
		&=\frac{1}{\Gamma(1-\alpha)} \int_{-z_0}^z \left( \int_{\left(\frac{z}{\tau}\right)^\frac{2+m}{\alpha}}^1\left((A+B)(z-\tau \sigma^\frac{\alpha}{2+m}) - B\tau \sigma^\frac{\alpha}{2+m}\right)(1-\sigma)^{-\alpha}d\sigma\right)U(\tau)d\tau.
	\end{split}
\end{equation}
It is convenient to define
\begin{equation}\label{eqn:KDefinition}
	K(z,\tau) := \int_{\left(\frac{z}{\tau}\right)^\frac{2+m}{\alpha}}^1\left((A+B)(z-\tau \sigma^\frac{\alpha}{2+m}) - B\tau \sigma^\frac{\alpha}{2+m}\right)(1-\sigma)^{-\alpha}d\sigma.
\end{equation}
Actually, the above defined kernel has an exact representation and a lot can be said about its behavior.
\begin{prop}\label{prop:Kernel}
	For any $\tau \leq z \leq 0$, $0<\alpha<1$, and $m > 0$ we have
	\begin{equation}\label{eqn:KExact}
		\begin{split}
			K(z,\tau) &= z \left(1-\left(\frac{z}{\tau}\right)^\frac{2+m}{\alpha}\right)^{1-\alpha} \\
			&- \left(1-\alpha+\frac{\alpha}{2+m}\right)\tau \left(\beta\left(1+\frac{\alpha}{2+m},1-\alpha\right)-\beta_{\left(\frac{z}{\tau}\right)^\frac{2+m}{\alpha}}\left(1+\frac{\alpha}{2+m},1-\alpha\right)\right),
		\end{split}
	\end{equation}
	where $\beta = \beta(x,y)$ is the Euler beta function, and $\beta_\xi = \beta_{\xi}(x,y) := \int_0^\xi t^{x-1} (1-t)^{y-1} dt$ its incomplete version. Moreover, we have the following:
	\begin{enumerate}[label=\roman*)]
		\item Positivity: $K(z,t) \geq 0$ with $K(z,z) = 0$.
		\item Boundedness:
		\begin{equation}
			0\leq K_-(z,\tau) \leq K(z,\tau) \leq K(0,\tau),
		\end{equation}
		where 
		\begin{equation}\label{eqn:KBound}
			\begin{split}
				&K_-(z,\tau) := -\frac{\alpha}{2+m} \beta\left(1+\frac{\alpha}{2+m},1-\alpha\right)\left( 1-\left(\frac{z}{\tau}\right)^\frac{2+m}{\alpha}\right)^{1-\alpha}\tau,\\  &K(0,\tau) =- \left(1-\alpha+\frac{\alpha}{2+m}\right)\beta\left(1+\frac{\alpha}{2+m},1-\alpha\right)\tau.
			\end{split}
		\end{equation}
		\item Global H\"older regularity in the first variable:
		\begin{equation}\label{eqn:KHolder}
			|K(z, \tau) - K(w, \tau)| \leq C |z-w|^{1-\alpha}, \quad \tau \leq \min\{z,w\}, \quad z, w \leq 0,
		\end{equation}
		for some $C>0$. 
		\item Behavior for $\tau \sim -\infty$
		\begin{equation}
			K(z,\tau) \sim z- \left(1-\alpha+\frac{\alpha}{2+m}\right)\beta\left(1+\frac{\alpha}{2+m},1-\alpha\right)\tau.
		\end{equation}
		\item Behavior for $\tau \sim z$
		\begin{equation}\label{eqn:KAsymz}
			K(z,\tau) \sim -\frac{\alpha}{(1-\alpha)(2+m)} z \left(\frac{2+m}{\alpha} \left(1-\frac{z}{\tau}\right)\right)^{1-\alpha}.
		\end{equation}
	\end{enumerate}
\end{prop}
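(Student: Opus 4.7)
The plan is to handle all six items by returning to the defining integral \eqref{eqn:KDefinition} and exploiting the algebraic identity $A+B = 1-\alpha$, which follows directly from the definitions $A = 1-\alpha-\alpha/(2+m)$ and $B = \alpha/(2+m)$. With this, the integrand in \eqref{eqn:KDefinition} simplifies to
\begin{equation*}
	\big[(1-\alpha)z - (1-\alpha+B)\tau\sigma^{B}\big](1-\sigma)^{-\alpha}, \qquad B = \frac{\alpha}{2+m},
\end{equation*}
and the closed form \eqref{eqn:KExact} follows by splitting the integral into two pieces: the first integrates elementarily to $(1-a)^{1-\alpha}/(1-\alpha)$ with $a := (z/\tau)^{(2+m)/\alpha}$, and the second is by definition $\beta(1+B,1-\alpha) - \beta_a(1+B,1-\alpha)$.

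For items (i) and (ii) the decisive step is to differentiate $K$ in $z$ via the Leibniz rule. The integrand evaluated at the lower endpoint $\sigma = a$ reduces, via $\tau a^B = z$, to $-Bz(1-a)^{-\alpha}$; combining this boundary contribution with the bulk derivative and using $B(2+m)/\alpha = 1$ yields the remarkable collapse
\begin{equation*}
	\partial_z K(z,\tau) = (1-a)^{-\alpha}.
\end{equation*}
Since $K(\tau,\tau) = 0$ (the integration domain degenerates when $a=1$), integrating this identity from $\tau$ to $z$ proves both $K \geq 0$ and $K(z,\tau) \leq K(0,\tau)$, with the right-hand side read off from the closed form by setting $z = 0$ and using $\beta_0(1+B,1-\alpha) = 0$. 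For the lower bound, applying the same Leibniz computation to $K_-$ gives $\partial_z K_- = (1-\alpha)\beta(1+B, 1-\alpha)\,a^{1-B}(1-a)^{-\alpha}$; one then checks that $(1-\alpha)\beta(1+B, 1-\alpha) \leq 1$ by bounding $\sigma^B \leq 1$ inside the beta integral, while $a^{1-B} \leq 1$ since $a \in [0,1]$ and $1-B > 0$. Thus $\partial_z K_- \leq \partial_z K$, and since both vanish at $z = \tau$, integration delivers $K_- \leq K$.

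Items (iii)--(v) then follow directly from the closed form. For the Hölder estimate, I would treat the two summands in \eqref{eqn:KExact} separately: the first, $z(1-a)^{1-\alpha}$, inherits $(1-\alpha)$-Hölder continuity from the power $y\mapsto y^{1-\alpha}$ composed with the locally Lipschitz $z \mapsto a$, while for the second I would use $|\beta_{y_1} - \beta_{y_2}| \leq \int_{\min}^{\max}(1-\sigma)^{-\alpha}d\sigma \leq |y_1-y_2|^{1-\alpha}/(1-\alpha)$ (again using $\sigma^B \leq 1$). The resulting constant $C$ will depend on $z_0$ through $|\tau|^{\alpha}$ but is otherwise uniform. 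The asymptotics are Taylor expansions of \eqref{eqn:KExact}: as $\tau \to -\infty$, $a \to 0$, $(1-a)^{1-\alpha}\to 1$, $\beta_a \to 0$, giving (iv); as $\tau \to z^-$, writing $a = (1-s)^{(2+m)/\alpha}$ with $s = 1-z/\tau \to 0^+$ gives $1-a \sim (2+m)s/\alpha$ and $\beta - \beta_a \sim (1-a)^{1-\alpha}/(1-\alpha)$, so the two summands in \eqref{eqn:KExact} combine with prefactor $1 - (1-\alpha+B)/(1-\alpha) = -B/(1-\alpha) = -\alpha/[(2+m)(1-\alpha)]$, matching \eqref{eqn:KAsymz}.

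The main obstacle should be the derivative identity $\partial_z K = (1-a)^{-\alpha}$: both the bulk and the boundary contributions from Leibniz are individually nontrivial and only cancel correctly by virtue of the two algebraic identities $A+B = 1-\alpha$ and $B(2+m)/\alpha = 1$. Once this identity is in hand, positivity, both bounds, and the lower-bound comparison reduce to one-line corollaries, and the Hölder estimate together with the asymptotic expansions require only bookkeeping.
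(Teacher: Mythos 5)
Your proposal is correct, and for items (i)--(iii) it takes a genuinely different route from the paper. The derivation of the closed form \eqref{eqn:KExact} and the two asymptotic expansions (iv)--(v) coincide with the paper's argument (rewrite via $A+B=1-\alpha$, elementary first integral, incomplete beta for the second, then $a\to 0$ resp. $a\to 1$). The difference lies in how you get positivity and the two-sided bounds: the paper proves $K\geq 0$ directly from the sign of the integrand ($z-\tau\sigma^{\alpha/(2+m)}\geq 0$), obtains the upper bound by inspection of \eqref{eqn:KExact}, and gets the lower bound by the substitution $\sigma = s+(1-s)(z/\tau)^{(2+m)/\alpha}$ together with $s+(1-s)a\geq s$. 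You instead establish the identity $\partial_z K(z,\tau)=(1-a)^{-\alpha}$ with $a=(z/\tau)^{(2+m)/\alpha}$ — which I verified: the boundary term from Leibniz is $a(1-a)^{-\alpha}$ and the bulk term is $(1-a)^{1-\alpha}$, summing to $(1-a)^{-\alpha}$ — and then read off positivity, the upper bound, and (via $\partial_z K_-=(1-\alpha)\beta(1+B,1-\alpha)\,a^{1-B}(1-a)^{-\alpha}\leq \partial_z K$ with both functions vanishing at $z=\tau$) the lower bound. This identity is in fact computed in the paper only later, in the proof of Theorem~\ref{thm:Solution}, to show monotonicity of $U$; front-loading it here unifies (i)--(ii) nicely. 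For the H\"older estimate (iii) your argument is more quantitative than the paper's (which patches smoothness away from $\tau$ with the endpoint asymptotics): your constant scales like $|\tau|^{\alpha}((2+m)/\alpha)^{1-\alpha}$, which is uniform on the relevant domain $\tau\in[-z_0,0]$; note that it is not uniform over all $\tau<0$, but the paper's own statement and use of (iii) are confined to the bounded setting, so this is not a defect relative to the original.
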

\begin{proof}
	By rewriting the integral in \eqref{eqn:KDefinition} and recalling that according to \eqref{eqn:SelfSimilarEq} we have $A+B = 1-\alpha$ and $A + 2B = 1-\alpha + \alpha/(2+m)$, we have
	\begin{equation}\label{eqn:KDefinition2}
		K(z,\tau) = (1-\alpha) z \int_{\left(\frac{z}{\tau}\right)^\frac{2+m}{\alpha}}^1 (1-\sigma)^{-\alpha} d\sigma - \left(1-\alpha+\frac{\alpha}{2+m}\right) \tau \int_{\left(\frac{z}{\tau}\right)^\frac{2+m}{\alpha}}^1 \sigma^{\frac{\alpha}{2+m}} (1-\sigma)^{-\alpha}d\sigma.
	\end{equation}
	We immediately notice that the first integral is elementary, while the second is related to beta and incomplete beta functions with parameters $1+\alpha/(2+m)$ and $1-\alpha$.  
	
	Now, since in the integral \eqref{eqn:KDefinition} we have $-\tau \sigma^{\alpha/(2+m)} \geq -z \geq 0$, it follows that $z -\tau \sigma^{\alpha/(2+m)} \geq 0$. Therefore, both terms of the integrand are non-negative, and thus $K(z,\tau) \geq 0$. It is clear that when $\tau \rightarrow z^-$ the integration range vanishes yielding $K(z,z) = 0$. 
	
	The kernel bounds can be found from \eqref{eqn:KDefinition}. By above estimates we have $z-\tau \sigma^{\alpha/(2+m)}\geq 0$ and hence, 
	\begin{equation}
		K(z,\tau) \geq - \frac{\alpha}{2+m} \tau \int_{\left(\frac{z}{\tau}\right)^\frac{2+m}{\alpha}}^1 \sigma^{\frac{\alpha}{2+m}} (1-\sigma)^{-\alpha}d\sigma.
	\end{equation}
	Next, we want to bound the integral from below by an elementary function. To this end, substitute $\sigma = s + (1-s)(z/\tau)^{(2+m)/\alpha}$ to fix the limits of integration. After elementary manipulations, the integral then becomes
	\begin{equation}
		\int_{\left(\frac{z}{\tau}\right)^\frac{2+m}{\alpha}}^1 \sigma^{\frac{\alpha}{2+m}} (1-\sigma)^{-\alpha}d\sigma = \left(1-\left(\frac{z}{\tau}\right)^\frac{2+m}{\alpha}\right)^{1-\alpha} \int_0^1 \left(s + (1-s)\left(\frac{z}{\tau}\right)^{\frac{2+m}{\alpha}}\right)^\frac{\alpha}{2+m} (1-s)^{-\alpha} ds.
	\end{equation}
	But, it is clear that $ s + (1-s)(z/\tau)^{(2+m)/\alpha} \geq s$, hence
	\begin{equation}
		K(z,\tau) \geq - \frac{\alpha}{2+m} \tau \left(1-\left(\frac{z}{\tau}\right)^\frac{2+m}{\alpha}\right)^{1-\alpha} \int_0^1 s^\frac{\alpha}{2+m} (1-s)^{-\alpha} ds,
	\end{equation}
	which is precisely the definition of the beta function. Hence, the lower bound \eqref{eqn:KBound} is proved. On the other hand, the upper bound can be found directly from \eqref{eqn:KExact} by noticing that $z \leq 0$ and the incomplete beta function is positive. Similarly, the asymptotic behavior for $\tau \rightarrow -\infty$ follows from the fact that $z/\tau \rightarrow 0$. On the other hand, when $\tau \rightarrow z^-$ we have $(z/\tau)^{(2+m)/\alpha}\rightarrow 1$. By standard results from asymptotic theory, from \eqref{eqn:KDefinition} we further have
	\begin{equation}
		\begin{split}
			K(z,\tau) 
			&\sim \left((1-\alpha) (z-\tau) - \frac{\alpha}{2+m} \tau\right) \int_{\left(\frac{z}{\tau}\right)^\frac{2+m}{\alpha}}^1 (1-\sigma)^{-\alpha} d\sigma \\
			&\sim - \frac{\alpha}{(1-\alpha)(2+m)} \tau \left(1-\left(\frac{z}{\tau}\right)^\frac{2+m}{\alpha}\right)^{1-\alpha} \sim 
			- \frac{\alpha}{(1-\alpha)(2+m)} \tau \left(\frac{2+m}{\alpha}\left(1-\frac{z}{\tau}\right)\right)^{1-\alpha},
		\end{split}
	\end{equation}
	what concludes the proof of the asymptotic behavior. 
	
	Finally, the H\"older regularity of the kernel follows from the observation that from \eqref{eqn:KExact} for fixed $\tau \leq \tau_0 < z$, that is, away from $z \rightarrow \tau^+$, the function $K(\cdot, \tau)$ is smooth as a sum of an elementary function and an integral. On the other hand, for $z \rightarrow \tau^{+}$, the asymptotic relation \eqref{eqn:KAsymz} proved shows that $K(z, \tau) = O((z-\tau)^{1-\alpha})$. Therefore, we conclude that $K(\cdot,\tau)$ is a $(1-\alpha)-$H\"older function.
\end{proof}
\begin{rem}
	By the asymptotic relations, we see that the kernel $K(z,\tau)$ behaves linearly for $\tau$ away from $z$ and according to the power law $(1-z/\tau)^{1-\alpha}$ for $\tau$ close to $z$. In Fig. \ref{fig:Kernel} we present an illustrative example.
\end{rem}

\begin{figure}
	\centering
	\includegraphics{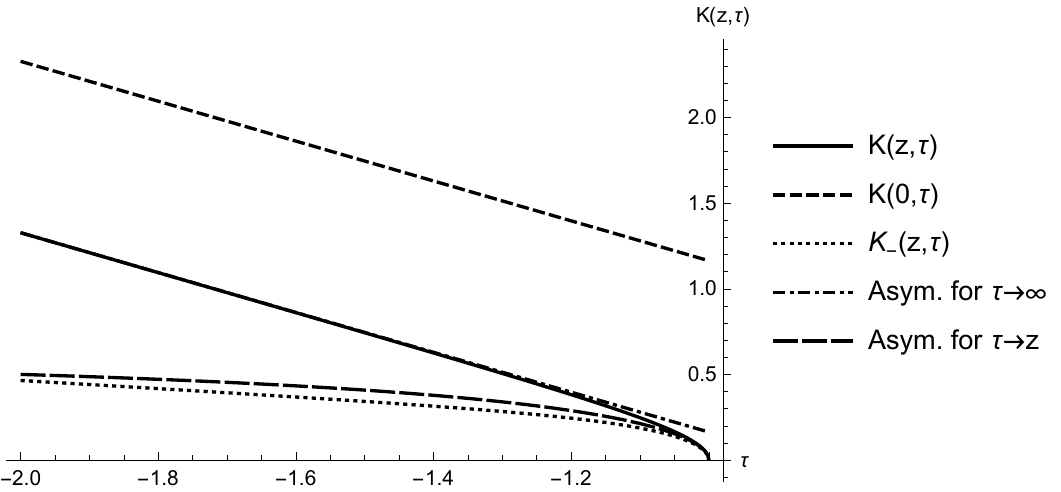}
	\label{fig:Kernel}
	\caption{Exemplary plot of the kernel \eqref{eqn:KDefinition} along its approximation for $z= - 1$, $\alpha = 0.5$ and $m=2$. }
\end{figure}

In this way, our equation \eqref{eqn:FixedPoint1} transforms into
\begin{equation}\label{eqn:FixedPoint2}
	U(z) = \left(\frac{m+1}{\Gamma(1-\alpha)}\int_{-z_0}^z K(z,\tau)U(\tau) d\tau\right)^\frac{1}{m+1}, \quad -z_0 \leq z \leq 0.
\end{equation}
Immediately, we can see that by Proposition \ref{prop:Kernel} $i)$ the above can only have non-negative real solutions or a \textit{trivial solution} that is identically equal to zero. 

\subsection{The solution of the integral equation}
We start with the following existence result. 
\begin{prop}\label{prop:Existence}
	There exists a continuous solution to \eqref{eqn:FixedPoint2} with arbitrary $z_0 > 0$.
\end{prop}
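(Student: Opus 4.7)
The natural strategy is to recast the problem as a fixed-point equation $U = T[U]$ for the operator
\begin{equation*}
	T[U](z) := \left(\frac{m+1}{\Gamma(1-\alpha)}\int_{-z_0}^z K(z,\tau) U(\tau)\, d\tau\right)^{\frac{1}{m+1}}
\end{equation*}
acting on $C([-z_0, 0])$ with the supremum norm, and to apply Schauder's fixed-point theorem on a suitable closed convex subset. A contractivity argument is unavailable because $\phi(y) := y^{1/(m+1)}$ is only H\"older, not Lipschitz, near $0$; this non-Lipschitz behaviour is in fact the core obstacle and shapes the entire strategy below.

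First I would construct an invariant ball. Let $B_M := \{U \in C([-z_0, 0]) : 0 \le U(z) \le M\}$, which is closed, bounded, and convex. Non-negativity of $T[U]$ is immediate from Proposition \ref{prop:Kernel}(i). For the upper bound, by Proposition \ref{prop:Kernel}(ii) one has $K(z,\tau) \le K(0,\tau) = c_0 (-\tau)$ with an explicit constant $c_0 = c_0(\alpha, m) > 0$, so that
\begin{equation*}
	T[U](z)^{m+1} \le \frac{(m+1)\, c_0\, M}{\Gamma(1-\alpha)} \int_{-z_0}^0 (-\tau)\, d\tau = \frac{(m+1)\, c_0\, z_0^2\, M}{2\, \Gamma(1-\alpha)}.
\end{equation*}
Choosing $M$ with $M^m \ge (m+1) c_0 z_0^2 / (2\Gamma(1-\alpha))$ makes $B_M$ invariant under $T$.

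Next I would establish compactness and continuity of $T$ on $B_M$. The key observation is that $\phi$ is concave with $\phi(0)=0$, hence subadditive on $[0,\infty)$, so $|\phi(a) - \phi(b)| \le \phi(|a-b|)$ for all $a, b \ge 0$. For $-z_0 \le w \le z \le 0$ and $U \in B_M$, I split
\begin{equation*}
	\int_{-z_0}^z K(z,\tau) U(\tau)\, d\tau - \int_{-z_0}^w K(w,\tau) U(\tau)\, d\tau = \int_{-z_0}^w [K(z,\tau) - K(w,\tau)] U(\tau)\, d\tau + \int_w^z K(z,\tau) U(\tau)\, d\tau,
\end{equation*}
control the first integral by the H\"older estimate of Proposition \ref{prop:Kernel}(iii) and the second by the sup-bound of Proposition \ref{prop:Kernel}(ii), and combine with subadditivity of $\phi$ to obtain
\begin{equation*}
	|T[U](z) - T[U](w)| \le C |z-w|^{\frac{1-\alpha}{m+1}},
\end{equation*}
with $C = C(M, z_0, \alpha, m)$ independent of $U \in B_M$. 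Uniform boundedness together with this uniform H\"older modulus gives, by Arzel\`a--Ascoli, that $T(B_M)$ is relatively compact in $C([-z_0, 0])$. Continuity of $T$ follows from an analogous subadditivity estimate applied pointwise: $\|T[U_n] - T[U]\|_\infty \le \phi(C' \|U_n - U\|_\infty) \to 0$. An application of Schauder's fixed-point theorem to the compact, continuous self-map $T: B_M \to B_M$ then yields a fixed point, i.e.\ a continuous solution of \eqref{eqn:FixedPoint2}.

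\textbf{Main obstacle.} The central difficulty is the sub-Lipschitz character of $\phi(y) = y^{1/(m+1)}$ near $y=0$: it prevents any standard Banach contraction argument and also blocks naive estimates of $|T[U](z) - T[U](w)|$ by the Lipschitz constant of $\phi$ times a modulus of the integral. The workaround is to exploit subadditivity of $\phi$ (a consequence of concavity together with $\phi(0)=0$), which transfers the $(1-\alpha)$-H\"older regularity of $K$ from Proposition \ref{prop:Kernel}(iii) into a $(1-\alpha)/(m+1)$-H\"older regularity of $T[U]$, just enough to invoke Arzel\`a--Ascoli. A secondary caveat is that Schauder's theorem does not by itself preclude the trivial fixed point $U \equiv 0$, so positivity of the produced solution is not addressed at this stage and would require a separate argument.
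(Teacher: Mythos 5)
Your proposal is correct and follows essentially the same route as the paper: Schauder's fixed-point theorem on an invariant ball in $C([-z_0,0])$, with invariance from the bound $K(z,\tau)\le K(0,\tau)$ and relative compactness via Arzel\`a--Ascoli, using the subadditivity of $y\mapsto y^{1/(m+1)}$ (the paper's ``reverse H\"older'' inequality) and the same splitting of the integral difference. The only minor differences are cosmetic: you derive a quantitative $(1-\alpha)/(m+1)$-H\"older modulus from Proposition \ref{prop:Kernel}(iii) where the paper invokes uniform continuity of the kernel, and you explicitly verify continuity of the operator, a step the paper leaves implicit.
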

\begin{proof}
	We will use Schauder's fixed point theorem (see for ex. \cite{zeidler2012applied}, Section 1.15) to prove existence of the solution. Define the following set
	\begin{equation}
		A:=\left\{V \in C([-z_0,0]): \, \|V\|_\infty \leq \left(\frac{m+1}{\Gamma(1-\alpha)}\int_{-z_0}^0 K(0, \tau) d\tau\right)^\frac{1}{m}\right\},
	\end{equation}
	and the operator $\mathcal{N}: C([-z_0,0]) \mapsto C([-z_0,0])$ by
	\begin{equation}\label{eqn:NOperator}
		\mathcal{N}(V) := \left(\frac{m+1}{\Gamma(1-\alpha)}\int_{-z_0}^z K(z,\tau)V(\tau) d\tau\right)^\frac{1}{m+1}. 
	\end{equation}
	It is clear that since $A$ is a ball, it is a closed convex set. Moreover, for any $V\in A$ we have
	\begin{equation}
		\begin{split}
			\mathcal{N}(V) 
			&\leq \left(\left(\frac{m+1}{\Gamma(1-\alpha)}\int_{-z_0}^0 K(0, \tau) d\tau\right)^\frac{1}{m} \frac{m+1}{\Gamma(1-\alpha)}\int_{-z_0}^0 K(z,\tau)d\tau\right)^\frac{1}{m+1} \\
			&\leq \left(\left(\frac{m+1}{\Gamma(1-\alpha)}\int_{-z_0}^0 K(0, \tau) d\tau\right)^\frac{1}{m} \frac{m+1}{\Gamma(1-\alpha)}\int_{-z_0}^0 K(0,\tau)d\tau\right)^\frac{1}{m+1},
		\end{split}
	\end{equation}
	where the second inequality follows from \eqref{eqn:KExact}. Now, by simple addition of exponents we have
	\begin{equation}\label{eqn:UBounded}
		\mathcal{N}(V) \leq \left(\left(\frac{m+1}{\Gamma(1-\alpha)}\int_{-z_0}^0 K(0, \tau) d\tau\right)^{1+\frac{1}{m}} \right)^\frac{1}{m+1} = \left(\frac{m+1}{\Gamma(1-\alpha)}\int_{-z_0}^0 K(0, \tau) d\tau\right)^\frac{1}{m},
	\end{equation}
	hence, the operator $\mathcal{N}$ maps $A$ into itself. To use Schauder's theorem, we only need to show that $\mathcal{N}(A)$ is relatively compact. To this end, take any sequence of functions $\{W_n\}_n \in \mathcal{N}(A)$. Then, for each $n\in\mathbb{N}$ there exists a function $V_n\in A$ such that $\mathcal{N}(V_n) = W_n$. We will show that $\mathcal{N}(V_n)$ has a convergent subsequence, which in turn is implied by the Arzel\`a-Ascoli theorem (see for ex. \cite{zeidler2012applied}, Section 1.11). By \eqref{eqn:UBounded}, we immediately see that $\{\mathcal{N}(V_n)\}_n$ is uniformly bounded. It remains to prove that this sequence is equicontinuous. Fix, $- z_0 < w < z < 0$ and arbitrarily $\epsilon > 0$. Now, use the reverse H\"older inequality $|a^p - b^p| \leq |a-b|^p$ with $0<p<1$ in \eqref{eqn:NOperator} to obtain
	\begin{equation}\label{eqn:Equi}
		\begin{split}
			\mathcal{N}(V_n)(z) &- \mathcal{N}(V_n)(w) 
			\leq \left(\frac{m+1}{\Gamma(1-\alpha)}\right)^\frac{1}{m+1} \left(\int_{-z_0}^z K(z,\tau)V_n(\tau) d\tau-\int_{-z_0}^w K(w,\tau)V_n(\tau) d\tau\right)^\frac{1}{m+1} \\
			&\leq \left(\frac{m+1}{\Gamma(1-\alpha)}\right)^\frac{1}{m+1} \left(\int_{w}^z K(z,\tau)V_n(\tau) d\tau+\int_{-z_0}^w \left(K(z,\tau) - K(w,\tau)\right)V_n(\tau) d\tau\right)^\frac{1}{m+1} \\
			&\leq \underbrace{\left(\frac{m+1}{\Gamma(1-\alpha)}\right)^\frac{1}{m}\left(\int_{-z_0}^0 K(0, \tau) d\tau\right)^\frac{1}{m+1}}_{:= C_0} \left(\int_{w}^z K(z,\tau) d\tau+\int_{-z_0}^w \left(K(z,\tau) - K(w,\tau)\right) d\tau\right)^\frac{1}{m+1},
		\end{split}
	\end{equation}
	where in the second inequality we have added and subtracted $\int_{-z_0}^w K(z,\tau)V_n(\tau) d\tau$ and in the third the fact that $V_n \in A$, and hence, is bounded. Now, note that by \eqref{eqn:KDefinition} or \eqref{eqn:KExact} the kernel $K(z,\tau)$ is a continuous function of two variables on $-z_0 \leq \tau \leq z$. Therefore, it is uniformly bounded by some constant $C_1>0$ (which can be found explicitly from \eqref{eqn:KExact}). Moreover, since its domain is compact, the kernel is actually uniformly continuous. Therefore, there exists $\delta_0 > 0$ such that for $|z - w| < \delta_0$, we have
	\begin{equation}
		\left|K(z,\tau) - K(w,\tau)\right| < \frac{\epsilon^{m+1}}{2(z_0+w)C_0^{m+1}}. 
	\end{equation}
	Therefore,
	\begin{equation}
		\mathcal{N}(V_n)(z) - \mathcal{N}(V_n)(w) 
		< C_0 \left(C_1(z-w) + \frac{1}{2C_0^{m+1}}\epsilon^{m+1} \right)^\frac{1}{m+1}.
	\end{equation}
	Now, set $\delta = \min\{\delta_0, \epsilon^{m+1}/(2C_0^{m+1}C_1)\}$ and observe that for $|z-w| < \delta$ we have
	\begin{equation}
		\mathcal{N}(V_n)(z) - \mathcal{N}(V_n)(w) 
		< C_0 \left(\frac{1}{2C_0^{m+1}}\epsilon^{m+1} + \frac{1}{2C_0^{m+1}}\epsilon^{m+1} \right)^\frac{1}{m+1} = \epsilon.
	\end{equation}
	Since $\delta$ does not depend on $n$, we have shown that the sequence $\{\mathcal{N}(V_n)\}_n$ is equicontinuous. From the Arzel\`a-Ascoli theorem it follows that there exists a subsequence $\{\mathcal{N}(V_{n_k})\}_{n_k}$ convergent to a function $W \in \mathcal{N}(A)$. The set $\mathcal{N}(A)$ is therefore relatively compact. Schauder's theorem then grants that $\mathcal{N}$ has a fixed point in $A$.
\end{proof}

Before we proceed to investigating the nontrivial solution of \eqref{eqn:FixedPoint2}, we prove the following auxiliary lemma, which is a variant of Gronwall's inequality. We include its simple proof for completeness. It states that a function satisfying a certain nonlinear inequality has to be separated from zero by a constant. 
\begin{lem}\label{lem:Gronwall}
	Let $y=y(t):\mathbb{R}^+\to \mathbb{R}^+$ be a function satisfying
	\begin{equation}
		y(t)^{m+1} \geq D \int_0^1 s^{\gamma} y(s t) ds, 
	\end{equation}
	for some constant $D > 0$, $m > 0$, and $\gamma > -1$. Then,
	\begin{equation}
		y(t) \geq \left(\frac{D}{1+\gamma}\right)^\frac{1}{m}.
	\end{equation}
\end{lem}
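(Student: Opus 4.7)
The natural strategy is the classical \emph{running-infimum trick} used for nonlinear Gronwall-type bounds: since the right-hand side of the hypothesis is a weighted average of $y$ on the interval $(0,t]$, replacing every value of $y$ in that average by the infimum of $y$ should collapse the functional inequality to an algebraic one that pins down a lower bound.

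Concretely, I would fix $T>0$ and set $\Phi(T):=\inf_{0<t\le T} y(t)$, noting that $\Phi$ is nonincreasing in $T$. For each $t\in(0,T]$ and each $s\in(0,1]$, one has $st\le t\le T$, so $y(st)\ge \Phi(T)$. Plugging this into the hypothesis yields
\begin{equation*}
y(t)^{m+1} \;\ge\; D\int_0^1 s^{\gamma} y(st)\,ds \;\ge\; D\,\Phi(T)\int_0^1 s^{\gamma}\,ds \;=\; \frac{D}{1+\gamma}\,\Phi(T).
\end{equation*}
Since this holds for every $t\in(0,T]$, I can take the infimum of the left-hand side over $(0,T]$ (using that $a\mapsto a^{m+1}$ is increasing on $\mathbb{R}^+$, so that $\inf y^{m+1}=(\inf y)^{m+1}$) to obtain the purely algebraic inequality
\begin{equation*}
\Phi(T)^{m+1} \;\ge\; \frac{D}{1+\gamma}\,\Phi(T).
\end{equation*}
Dividing by $\Phi(T)>0$ gives $\Phi(T)^{m}\ge D/(1+\gamma)$, and hence $y(T)\ge \Phi(T)\ge (D/(1+\gamma))^{1/m}$. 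Since $T>0$ was arbitrary, the lemma follows.

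The only delicate point, and the main obstacle, is justifying the step ``divide by $\Phi(T)>0$'': the algebraic inequality by itself only forces $\Phi(T)\in\{0\}\cup[(D/(1+\gamma))^{1/m},\infty)$, and one must exclude the branch $\Phi(T)=0$. Because $y$ takes values in $\mathbb{R}^+$ and the hypothesis is strict enough to prevent $y$ from being identically $0$ on any interval $(0,\tau)$ (otherwise the right-hand side of the original inequality vanishes, forcing $y\equiv 0$ a.e., contradicting $y>0$), this can be argued either from implicit continuity of $y$ or by a short contradiction: if $\Phi(T)=0$, pick a sequence $t_n$ with $y(t_n)\to 0$ and apply the inequality at $t_n$ to get $\int_0^1 s^{\gamma} y(s t_n)\,ds\to 0$, which through Fatou and positivity of $y$ produces the desired contradiction. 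With this positivity established, the four-line computation above closes the proof.
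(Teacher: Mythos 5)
Your running-infimum reduction is clean, and the algebraic step $\Phi(T)^{m+1}\ge \frac{D}{1+\gamma}\,\Phi(T)$ is correct; but the proof stands or falls on excluding the branch $\Phi(T)=0$, and the argument you sketch for that does not close. If $y(t_n)\to 0$, the hypothesis forces $\int_0^1 s^{\gamma}y(st_n)\,ds\to 0$, and Fatou then yields only that $\liminf_n y(st_n)=0$ for a.e.\ $s$. Since $g(t):=\int_0^1 s^\gamma y(st)\,ds = t^{-1-\gamma}\int_0^t w^\gamma y(w)\,dw$ is continuous and strictly positive on $(0,\infty)$, any minimizing sequence must satisfy $t_n\to 0^+$; in that regime the conclusion ``$y$ takes arbitrarily small values at points accumulating at $0$'' is exactly the assumption $\Phi(T)=0$ you started from, and is perfectly compatible with $y>0$ pointwise (compare $y(t)=t$). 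So no contradiction is produced, and no appeal to continuity of $y$ is available either: none is assumed, and in the paper's application $y=V$ is extracted from a solution that vanishes at the endpoint, so positivity of the infimum near $0$ is precisely the nontrivial content of the lemma. In short, the infimum trick defers the entire difficulty to the behavior of $y$ as $t\to 0^+$ and then does not resolve it.

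The paper's proof handles exactly this point. It sets $u(t)=\int_0^t w^\gamma y(w)\,dw$, converts the hypothesis into the differential inequality $u'(t)\ge D^{1/(m+1)}\,t^{\gamma-(1+\gamma)/(m+1)}\,u(t)^{1/(m+1)}$, and integrates $\bigl(u^{m/(m+1)}\bigr)'$ upward from $u(0)=0$. Because the sublinear power $u^{1/(m+1)}$ is integrated from the zero initial datum, this yields the quantitative bound $u(t)^{m/(m+1)}\ge \frac{D^{1/(m+1)}}{1+\gamma}\,t^{m(1+\gamma)/(m+1)}$ with no a priori positivity of $\inf y$ required; substituting back into the hypothesis gives the claim. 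To salvage your approach you would need an independent argument of comparable strength near $t=0$ (for instance this ODE comparison, or an iteration bootstrapping bounds of the form $y(t)\ge c_k t^{\beta_k}$ with $\beta_k\to 0$), at which point the infimum trick becomes redundant.
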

\begin{proof}
	We start by changing the variable $w = s t$, then
	\begin{equation}
		y(t)^{m+1} \geq D t^{-1-\gamma} \int_0^t w^{\gamma} y(w) dw.
	\end{equation}
	Define $u(t) := \int_0^t w^{\gamma} y(w) dw$ and differentiate to obtain
	\begin{equation}
		u'(t) = t^\gamma y(t) \geq t^\gamma\left(D t^{-1-\gamma} u(t)\right)^\frac{1}{m+1} = D^\frac{1}{m+1} t^{\gamma - \frac{1+\gamma}{m+1}} u(t)^\frac{1}{m+1},
	\end{equation}
	where we used our assumption. Now, this is an ordinary differential inequality that can be resolved by separating variables and identifying a derivative
	\begin{equation}
		\frac{m+1}{m}\left(u(t)^\frac{m}{m+1}\right)'\geq D^\frac{1}{m+1} t^{\gamma - \frac{1+\gamma}{m+1}}.
	\end{equation}
	Since by definition, $u(0) = 0$ we can integrate to obtain
	\begin{equation}
		u(t)^\frac{m}{m+1} \geq \frac{m}{m+1} \frac{D^\frac{1}{m+1} }{1+\gamma- \frac{1+\gamma}{m+1}} t^{1 + \gamma - \frac{1+\gamma}{m+1}} = \frac{D^\frac{1}{m+1}}{1 + \gamma} t^{\frac{m(1+\gamma)}{m+1}}.
	\end{equation}
	Hence, going back to the original function leads to
	\begin{equation}
		y(t)^{m+1} \geq D t^{-1-\gamma} D^\frac{1}{m} \frac{1}{\left(1+\gamma\right)^\frac{m+1}{m}} t^{1+\gamma} = \left(\frac{D}{1+\gamma}\right)^{\frac{m+1}{m}}. 
	\end{equation}
	Taking the root finishes the proof. 
\end{proof}

We are now ready to prove various properties, estimates, and regularity results.
\begin{thm}\label{thm:Solution}
	If $U=U(z)$ for $-z_0 \leq z \leq 0$ is a nontrivial solution of \eqref{eqn:FixedPoint2}, then the following hold:
	\begin{enumerate}[label=\roman*)]
		\item Monotonicity: $U=U(z)$ is increasing for $-z_0 < z \leq 0$. 
		\item Local differentiability: $U(z)$ is analytic away from $z_0$. In particular, the derivative at $z = 0$ is equal to 
		\begin{equation}\label{eqn:Derivative0}
			U'(0) = \frac{U(0)^{-m}}{\Gamma(1-\alpha)}\int_{-z_0}^0 U(\tau) d\tau.
		\end{equation}
		When $U$ satisfies the conservation of mass \eqref{eqn:MassConservationHalf} we have
		\begin{equation}
			U'(0) = \frac{U(0)^{-m}}{2\Gamma(1-\alpha)}.
		\end{equation}
		\item Global H\"older regularity:
		\begin{equation}
			|U(z)-U(w)|\leq C |z-w|^\frac{1-\alpha}{1+m}, \quad -z_0\leq z,w \leq 0,
		\end{equation}
		for some $C>0$.
		\item Bounds:
		\begin{equation}\label{eqn:UBound}
			U(z) \leq \left(\frac{m+1}{\Gamma(1-\alpha)}\int_{-z_0}^0 K(0, \tau) d\tau\right)^\frac{1}{m} = \left(\frac{m+1}{\Gamma(1-\alpha)}\left(1-\alpha+\frac{\alpha}{2+m}\right)\beta\left(1+\frac{\alpha}{2+m},1-\alpha\right)\frac{z_0^2}{2}\right)^\frac{1}{m},
		\end{equation}
		and 
		\begin{equation}
			U(z) \geq \left(\frac{\alpha(1+m)}{(2+m)\Gamma(1-\alpha)} \beta\left(1+\frac{\alpha}{2+m},1-\alpha\right) z_0^\alpha \right)^\frac{1}{m}(z_0+z)^\frac{2-\alpha}{m}.
		\end{equation}
		\item Behavior for $z \rightarrow -z_0^+$:
		\begin{equation}
			U(z) \sim \left(\frac{\frac{(m+1) z_0^\alpha}{\Gamma(2-\alpha)} \left(\frac{\alpha}{2+m} \right)^{\alpha}}{1+\frac{2-\alpha}{m}}\right)^\frac{1}{m} (z_0+z)^\frac{2-\alpha}{m}.
		\end{equation}
	\end{enumerate}
\end{thm}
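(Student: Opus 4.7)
The plan is to handle the five claims in an order reflecting their logical dependencies and degree of difficulty.

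First, I would establish monotonicity (i), the upper bound of (iv), and the local regularity with the derivative identity (ii) from a single ``integrated'' form of \eqref{eqn:FixedPoint2}. Going back one step before taking the $(m+1)$-th root, we have
\begin{equation*}
\tfrac{1}{m+1}U(z)^{m+1} = -B\!\int_{-z_0}^z w\,F_{\alpha,m}U(w)\,dw + (A+B)\!\int_{-z_0}^z\!\!\int_{-z_0}^w F_{\alpha,m}U(y)\,dy\,dw,
\end{equation*}
and differentiating yields $U(z)^m U'(z) = -B z\,F_{\alpha,m}U(z) + (A+B)\int_{-z_0}^z F_{\alpha,m}U(y)\,dy$. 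On $[-z_0,0]$ both terms on the right are non-negative (since $A+B = 1-\alpha > 0$, $-Bz\geq 0$, and $F_{\alpha,m}U \geq 0$ by positivity of its kernel), which gives (i). A consequence is that the maximum of $U$ is at $z=0$, so evaluating \eqref{eqn:FixedPoint2} there and using $K(z,\tau)\leq K(0,\tau)$ from Proposition \ref{prop:Kernel}(ii) gives $U(0)^m \leq (m+1)\Gamma(1-\alpha)^{-1}\!\int_{-z_0}^0 K(0,\tau)\,d\tau$; inserting the closed form of $K(0,\tau)$ produces the upper bound in (iv). For (ii), the $U^m U'$ identity together with $U>0$ on $(-z_0,0]$ (from non-triviality, monotonicity, and positivity-propagation through the integral equation) permits a standard bootstrap giving $U\in C^\infty((-z_0,0])$; analyticity follows by iterating with Cauchy-type estimates on the derivatives, exploiting that the $\sigma$-integration in $F_{\alpha,m}U$ is effectively supported away from $\sigma=0$ by the compact support of $U$. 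The value of $U'(0)$ follows by setting $z=0$ in the $U^m U'$ identity and evaluating $\int_{-z_0}^0 F_{\alpha,m}U(y)\,dy$ by Fubini and the change of variable $\tilde y = y\sigma^{-\alpha/(2+m)}$; the compact support collapses it to $\Gamma(2-\alpha)^{-1}\!\int_{-z_0}^0 U(y)\,dy$, yielding \eqref{eqn:Derivative0}.

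For the global H\"older regularity (iii) I would reuse the equicontinuity estimate from the proof of Proposition \ref{prop:Existence}, now with the quantitative bound \eqref{eqn:KHolder} in place of mere uniform continuity of $K$. The reverse H\"older inequality $|a^{1/(m+1)}-b^{1/(m+1)}|\leq|a-b|^{1/(m+1)}$ reduces the estimate for $|U(z)-U(w)|$ to a bound on the difference of the two integrals in \eqref{eqn:FixedPoint2}; splitting this into the tail $\int_w^z K(z,\tau)U(\tau)\,d\tau$ (linear in $z-w$ by boundedness of $K$ and $U$) and the overlap $\int_{-z_0}^w[K(z,\tau)-K(w,\tau)]U(\tau)\,d\tau$ (bounded by $C|z-w|^{1-\alpha}$ via \eqref{eqn:KHolder}), and absorbing the linear term into the $(1-\alpha)$-H\"older one for $|z-w|\leq 1$, yields the claimed exponent.

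The delicate step is the lower bound in (iv) and the asymptotic (v). I would shift to $v(t):=U(t-z_0)$ on $[0,z_0]$, so that \eqref{eqn:FixedPoint2} becomes $v(t)^{m+1} = (m+1)\Gamma(1-\alpha)^{-1}\!\int_0^t \widetilde K(t,\eta)\,v(\eta)\,d\eta$ with $\widetilde K(t,\eta):=K(t-z_0,\eta-z_0)$. From Proposition \ref{prop:Kernel}(ii) and the concavity estimate $1-(1-x)^a\geq x$ for $a\geq 1$ applied to $x = (t-\eta)/(z_0-\eta)$, one gets the pointwise lower bound $\widetilde K(t,\eta)\geq C_\beta\,(z_0-\eta)^\alpha (t-\eta)^{1-\alpha}$ with an explicit $C_\beta$ involving $\beta(1+\alpha/(2+m),1-\alpha)$. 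Bounding $(z_0-\eta)^\alpha \geq (z_0-t)^\alpha$ and rescaling $\eta=st$ produces
\begin{equation*}
v(t)^{m+1} \geq \tilde C\, (z_0-t)^\alpha\, t^{2-\alpha}\!\int_0^1 (1-s)^{1-\alpha} v(st)\,ds.
\end{equation*}
Setting $y(t):=v(t)/t^{(2-\alpha)/m}$ makes all powers of $t$ cancel precisely because $p m = 2-\alpha$ with $p=(2-\alpha)/m$, reducing the inequality to $y(t)^{m+1}\geq \tilde C\,(z_0-t)^\alpha\int_0^1 (1-s)^{1-\alpha} s^{(2-\alpha)/m} y(st)\,ds$. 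Lower-bounding $(1-s)^{1-\alpha}$ by a constant on a subinterval $[0,a]\subset[0,1]$ and rescaling puts this into the form of Lemma \ref{lem:Gronwall}, whose conclusion gives a positive constant lower bound on $y$, i.e., $U(z)\geq c\,(z+z_0)^{(2-\alpha)/m}$ with $c$ obtained from the resulting algebraic equation. For (v), the same reduction is performed with the sharper asymptotic $K(z,\tau)\sim D_0(z-\tau)^{1-\alpha}$, $D_0=z_0^\alpha(\alpha/(2+m))^\alpha/(1-\alpha)$, supplied by Proposition \ref{prop:Kernel}(v); substituting the self-similar ansatz $U(z)\sim C(z+z_0)^{(2-\alpha)/m}$ into \eqref{eqn:FixedPoint2} and evaluating the resulting beta-type integral yields a single algebraic equation for $C^m$ whose solution is the stated constant.

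The main obstacle is the passage from the inequality $y(t)^{m+1}\geq \tilde C\!\int_0^1 (1-s)^{1-\alpha} s^{(2-\alpha)/m} y(st)\,ds$ produced by the kernel to the clean weight $s^\gamma$ required by Lemma \ref{lem:Gronwall}, while preserving enough of the constant to nail down the leading coefficient in (v). Balancing the concavity-based kernel lower bound, the exact power-counting $p m = 2-\alpha$, and the beta-function evaluation sharply enough to recover the claimed asymptotic constant is the technical heart of the argument.
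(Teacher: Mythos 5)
Your handling of (i)--(iii) and of the upper bound in (iv) essentially reproduces the paper's argument: differentiating the fixed-point identity (the paper does this through $K_z$, you through the equivalent form \eqref{eqn:MainEqSSInt}) gives monotonicity, and at $z=0$ your Fubini/change-of-variable computation with $\Gamma(2-\alpha)=(1-\alpha)\Gamma(1-\alpha)$ correctly recovers \eqref{eqn:Derivative0}; the H\"older estimate is the equicontinuity computation of Proposition \ref{prop:Existence} upgraded by \eqref{eqn:KHolder}; the upper bound is monotonicity plus $K(z,\tau)\le K(0,\tau)$. The genuine problems lie in the lower bound of (iv) and in (v). Your pointwise kernel estimate $\widetilde K(t,\eta)\ge C_\beta (z_0-\eta)^\alpha (t-\eta)^{1-\alpha}$ is correct, but the two reductions that follow do not reach Lemma \ref{lem:Gronwall}. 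Replacing $(z_0-\eta)^\alpha$ by $(z_0-t)^\alpha$ leaves a prefactor that vanishes as $t\to z_0^-$ (i.e. $z\to 0^-$), while the lemma requires a constant $D$; at best this yields a bound degenerating like $(z_0-t)^{\alpha/m}$ precisely where the theorem asserts a uniform constant (the repair is $z_0-\eta\ge z_0(1-s)$, which keeps $z_0^\alpha$ but leaves the weight $s^{(2-\alpha)/m}(1-s)$). Moreover, the proposed ``restrict $(1-s)^{1-\alpha}$ to $[0,a]$ and rescale'' step does not put the inequality into the lemma's form: after $s=a\sigma$ the unknown appears as $y(a\sigma t)$ while the left-hand side is still $y(t)^{m+1}$, and $y$ (unlike $v$) is not known to be monotone, so the scaling mismatch cannot be absorbed. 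You would need either a Gronwall-type lemma for weights $s^\gamma(1-s)^\mu$, or the paper's device: the substitution $\tau=sz-(1-s)z_0$ together with a concavity (secant) bound of the bracketed integrand, which is what produces the pure weight $s^{(2-\alpha)/m}$ to which Lemma \ref{lem:Gronwall} applies. You flag this as the ``main obstacle'' yourself; as it stands it is an unresolved gap, not a proof.

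For (v), your plan (insert $U\sim C(z+z_0)^{(2-\alpha)/m}$ and evaluate the resulting beta-type integral) does not yield the stated constant: carried out faithfully, the self-consistency relation is $C^m=\frac{(m+1)z_0^\alpha(\alpha/(2+m))^\alpha}{\Gamma(2-\alpha)}\,\beta\left(2-\alpha,1+\frac{2-\alpha}{m}\right)$, whereas the theorem has the factor $\left(1+\frac{2-\alpha}{m}\right)^{-1}$ in place of the beta function; the two agree only at $\alpha=1$. The paper reaches its form by a different evaluation: it inserts the kernel asymptotics \eqref{eqn:KAsymz} and pulls $U(\tau)$ out of the integral at its endpoint value through a Lebesgue-differentiation step. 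So you must either adopt that evaluation or reconcile the discrepancy; asserting that your algebraic equation's ``solution is the stated constant'' is incorrect as written. In addition, ansatz substitution only identifies the constant under the assumption that a power-law limit exists; neither your sketch nor the matching itself shows that $U(z)(z+z_0)^{-(2-\alpha)/m}$ actually converges as $z\to -z_0^+$, which is the substance of claim (v) and is what the paper extracts (however briefly) directly from the integral equation.
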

\begin{proof}
	By the exact formula for the kernel $K=K(z,\tau)$ given in Proposition \ref{prop:Kernel} we observe that it is analytic on $-z_0 \leq \tau \leq \tau_0 < z \leq 0$ for any $-z_0<\tau_0 <z$. The H\"older continuity of $K(z,\tau)$ is only local near $\tau \rightarrow z^-$, which due to the limits of integration in \eqref{eqn:FixedPoint2} affects the solution $U(z)$ only when $z \rightarrow -z_0^+$ (see the concrete calculations in \eqref{eqn:KAsymCalcu} below). Hence, since the integrand in \eqref{eqn:FixedPoint2} is uniformly continuous in that region, we conclude that the solution $U(z)$ is differentiable. Continuing inductively, we conclude that in fact it is analytic on an interval $-z_0 < w \leq z \leq 0$. Moreover, by taking the $(m+1)$-th power of \eqref{eqn:FixedPoint2} we obtain
	\begin{equation}\label{eqn:UDerivative}
		U(z)^{m} U'(z) = \frac{1}{\Gamma(1-\alpha)}\int_{-z_0}^z K_z(z,\tau) U(\tau) d\tau,
	\end{equation}
	since by Proposition \ref{prop:Kernel} we have $K(z,z) = 0$. On the other hand, using the exact form of the kernel \eqref{eqn:KDefinition2} we can further obtain
	\begin{equation}
		\begin{split}
			K_z(z,\tau) 
			&= (1-\alpha) \int_{\left(\frac{z}{\tau}\right)^\frac{2+m}{\alpha}}^1 (1-\sigma)^{-\alpha} d\sigma + \frac{2+m}{\alpha} \left(\frac{z}{\tau}\right)^{\frac{2+m}{\alpha}-1}\frac{1}{\tau} \left(-(1-\alpha)z \left(1-\left(\frac{z}{\tau}\right)^{\frac{2+m}{\alpha}}\right)^{-\alpha} \right. \\
			&\left. + \left(1-\alpha+\frac{\alpha}{2+m}\right) \tau  \left(\frac{z}{\tau}\right) \left(1-\left(\frac{z}{\tau}\right)^\frac{2+m}{\alpha}\right)^{-\alpha}\right). \\
			&= (1-\alpha) \int_{\left(\frac{z}{\tau}\right)^\frac{2+m}{\alpha}}^1 (1-\sigma)^{-\alpha} d\sigma +\left(\frac{z}{\tau}\right)^{\frac{2+m}{\alpha}} \left(1-\left(\frac{z}{\tau}\right)^\frac{2+m}{\alpha}\right)^{-\alpha},
		\end{split}
	\end{equation}
	which is positive, hence, from \eqref{eqn:UDerivative} it follows that $U$ is increasing. Moreover, plugging $z=0$ we see that the second term vanishes leaving $K_z(0,\tau) = 1$. The formula \eqref{eqn:Derivative0} is then proved. 
	
	Now, we turn to global regularity. By exactly the same computations as in \eqref{eqn:Equi} we can show that
	\begin{equation}
		|U(z) - U(w)| \leq C_0 \left(\int_{w}^z K(z,\tau) d\tau+\int_{-z_0}^w \left|K(z,\tau) - K(w,\tau)\right| d\tau\right)^\frac{1}{m+1}.
	\end{equation}
	The first term above is obviously Lipschitz, while the second due to Proposition \ref{prop:Kernel} is $(1-\alpha)-$H\"older. Therefore, there exists a constant $C_1>0$ such that
	\begin{equation}
		|U(z) - U(w)| \leq C_1 |z-w|^{\frac{1-\alpha}{m+1}},
	\end{equation}
	therefore, the solution is globally H\"older continuous. 
	
	When $U$ and $K$ are positive, we have
	\begin{equation}
		U(z) \leq U(z)^\frac{1}{m+1} \left(\frac{m+1}{\Gamma(1-\alpha)}\int_{-z_0}^z K(z,\tau) d\tau\right)^\frac{1}{m+1}.
	\end{equation}
	Canceling the common term and using the bound by $K(0,\cdot)$ \eqref{eqn:KBound} yields \eqref{eqn:UBound}. The exact form of the integral of $K(0,\cdot)$ is straightforward to obtain as it is a linear function. The asymptotics for $z\rightarrow -z_0^{+}$ follows from the behavior of the kernel as proved in Proposition \ref{prop:Kernel}. To see this, let $z+z_0 =: h$ with $h\rightarrow 0^+$. Then, by Proposition \ref{prop:Kernel} we have
	\begin{equation}\label{eqn:KAsymCalcu}
		\begin{split}
			U(-z_0 + h) 
			&= \left(\frac{m+1}{\Gamma(1-\alpha)}\int_{-z_0}^{-z_0+h} K(-z_0 +h, \tau) U(\tau) d\tau\right)^\frac{1}{m+1} \\
			&\sim \left(U(-z_0+h)\frac{m+1}{\Gamma(1-\alpha)} \frac{\alpha (-z_0+h)}{(1-\alpha)(2+m)} \left(\frac{2+m}{\alpha} \right)^{1-\alpha} \int_{-z_0}^{-z_0+h}  \left(1-\frac{-z_0+h}{\tau}\right)^{1-\alpha}d\tau \right)^\frac{1}{m+1} \\
			&\sim \left(U(-z_0+h)\frac{(m+1)(-z_0+h)}{\Gamma(2-\alpha)} \left(\frac{2+m}{\alpha} \right)^{-\alpha}  h\left(1-\frac{-z_0+h}{-z_0}\right)^{1-\alpha}\right)^\frac{1}{m+1},
		\end{split}
	\end{equation}
	that is, for $z \rightarrow -z_0^+$, we have
	\begin{equation}
		U(z) \sim \left(\frac{(m+1) z}{\Gamma(2-\alpha)z_0^{1-\alpha}} \left(\frac{\alpha}{2+m} \right)^{\alpha} \right)^\frac{1}{m} (z_0+z)^\frac{2-\alpha}{m},
	\end{equation}
	where in the last asymptotic equality we have used the Lebesgue differentiation theorem, which implies that for any measurable function $f$ we have the following.
	\begin{equation}
		\lim\limits_{h\rightarrow 0^+} \frac{1}{h} \int_a^{a+h} f(x) dx = f(a).
	\end{equation}
	By elementary manipulations and simplification by the common term, we obtain the asymptotic behavior of the solution for $z\rightarrow -z_0^+$. \\
	The last part of the proof involves obtaining a nontrivial bound from below. Bounding the kernel from below, according to Proposition \ref{prop:Kernel}, yields
	\begin{equation}
		U(z)^{m+1} \geq -\underbrace{\frac{\alpha(m+1)}{(2+m)\Gamma(1-\alpha)} \beta\left(1+\frac{\alpha}{2+m},1-\alpha\right)}_{=: D_{\alpha,m}} \int_{-z_0}^z \left(1-\left(\frac{z}{\tau}\right)^\frac{2+m}{\alpha}\right)^{1-\alpha} \tau U(\tau) d\tau.
	\end{equation}
	Now, from the asymptotic behavior of $U$ we know that $U(z) = O((z+z_0)^{(2-\alpha)/m})$ as $z \rightarrow -z_0^+$, hence, it is natural to introduce a new positive function $V=V(z)$ according to
	\begin{equation}\label{eqn:VFunction}
		U(z) = (z+z_0)^\frac{2-\alpha}{m} V(z+z_0).
	\end{equation}
	Then, it satisfies
	\begin{equation}
		(z+z_0)^\frac{(2-\alpha)(m+1)}{m} V(z+z_0)^{m+1} \geq -D_{\alpha,m} \int_{-z_0}^z \left(1-\left(\frac{z}{\tau}\right)^\frac{2+m}{\alpha}\right)^{1-\alpha} \tau (\tau+z_0)^\frac{2-\alpha}{m} V(\tau + z_0) d\tau.
	\end{equation}
	Introduce a new integration variable to make the integration limits constant by $\tau = s z - (1-s)z_0$, then
	\begin{equation}
		\begin{split}
			(z+z_0)^\frac{(2-\alpha)(m+1)}{m}& V(z+z_0)^{m+1} \geq -D_{\alpha,m} (z+z_0)^{1+\frac{2-\alpha}{m}} \times \\
			&\times\int_0^1 \left[\left(1-\left(\frac{z}{s z - (1-s)z_0}\right)^\frac{2+m}{\alpha}\right)^{1-\alpha} (s z - (1-s)z_0)\right] s^\frac{2-\alpha}{m} V(s(z + z_0)) ds.
		\end{split}
	\end{equation}
	Now, the part of the integrand in the square bracket is a $s-$concave function, therefore it is always above a secant spanning from $s = 0$ to $s=1$. Hence,
	\begin{equation}
		\begin{split}
			(z+z_0)^\frac{(2-\alpha)(m+1)}{m} V(z+z_0)^{m+1} &\geq D_{\alpha,m} z_0 (z+z_0)^{1+\frac{2-\alpha}{m}} \left(1-\left(\frac{-z}{z_0}\right)^\frac{2+m}{\alpha}\right)^{1-\alpha} \times \\
			&\times\int_0^1  s^\frac{2-\alpha}{m} V(s(z + z_0)) d\tau.
		\end{split}
	\end{equation}
	Furthermore, we have $(-z/z_0)^{(2+m)/\alpha} \leq -z/z_0$ since $0\leq -z/z_0 \leq 1$ and $(2+m)/\alpha > 1$, which yields
	\begin{equation}
		\begin{split}
			(z+z_0)^\frac{(2-\alpha)(m+1)}{m} V(z+z_0)^{m+1} \geq D_{\alpha,m} z_0^\alpha (z+z_0)^{1+\frac{2-\alpha}{m}} \left(z+z_0\right)^{1-\alpha}\int_0^1  s^\frac{2-\alpha}{m} V(s(z + z_0)) d\tau.
		\end{split}
	\end{equation}
	The $z+z_0$ term can now be canceled, and we arrive at
	\begin{equation}
		V(z+z_0)^{m+1} \geq D_{\alpha,m} z_0^\alpha \int_0^1  s^\frac{2-\alpha}{m} V(s(z + z_0)) d\tau.
	\end{equation}
	We can now use Lemma \ref{lem:Gronwall} with $y(t) := V(t)$ and $\gamma := (2-\alpha)/m$ to obtain that
	\begin{equation}
		V(z+z_0)\geq \left(\frac{D_{\alpha,m}z_0^\alpha}{1+\frac{2-\alpha}{m}}\right)^\frac{1}{m}.
	\end{equation}
	Going back to the definition of $V$ as in \eqref{eqn:VFunction} concludes the proof. 
\end{proof}

\begin{rem}
	Theorem \ref{thm:Solution} states that we know that the solution is H\"older continuous. The \textit{a-priori} found exponent $(1-\alpha)/(1+m)$ is probably not optimal, since the asymptotic behavior suggests that $U$ should be $(2-\alpha)/m-$ H\"older continuous. Of course, these results are consistent because $(1-\alpha)/(1+m) < \min\{1,(2-\alpha)/m\}$. Moreover, if the solution were that regular, it would globally be Lipschitz continuous for $m\leq 2-\alpha$.
\end{rem}

\begin{rem}\label{rem:Symmetry}
	The derivative at $z=0$ of the solution of \eqref{eqn:FixedPoint2} satisfies a simple relation \eqref{eqn:Derivative0}. In particular, it is positive for $0\leq \alpha<1$ and converges to zero for $\alpha \rightarrow 1^-$ (since $U(0)$ is bounded then). Recalling that $U$ represents only the left-half of the symmetric Barenblatt solution, we conclude that it is smooth at the origin only in the classical case. This is in line with the findings for the fundamental solution of the linear subdiffusion equation \cite{mainardi2001fundamental}. 
\end{rem}


\subsection{Application to the porous medium equation}

Using the appropriate bounds for the kernel, we can prove that there exists a unique $z_0$ such that the mass conservation \eqref{eqn:MassConservationHalf} is satisfied.
\begin{prop}\label{prop:Mass}
	Let $U=U(z;z_0)$ be the solution of \eqref{eqn:FixedPoint2} for a given $z_0 > 0$. Then, there exists a unique $z_0^*> 0$ such that \eqref{eqn:MassConservationHalf} is satisfied for $z_0 = z_0^*$. 
\end{prop}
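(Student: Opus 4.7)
The plan is to exploit the scaling invariance of the integral equation \eqref{eqn:FixedPoint2}. The starting observation, readable directly from the exact formula \eqref{eqn:KExact}, is that the kernel is positively homogeneous of degree one, $K(\lambda z, \lambda \tau) = \lambda K(z, \tau)$ for every $\lambda > 0$, since its algebraic prefactors are linear in $z$ and $\tau$ while the argument $(z/\tau)^{(2+m)/\alpha}$ inside the (incomplete) beta function depends only on the ratio. I would verify this as a short calculation at the outset.

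I would then establish the scaling principle: if $U$ solves \eqref{eqn:FixedPoint2} on $[-z_0, 0]$, then for every $\lambda > 0$ the rescaled function $V(z) := \lambda^{-2/m} U(\lambda z)$ solves \eqref{eqn:FixedPoint2} on $[-z_0/\lambda, 0]$. To check this, substitute $V$ into the fixed-point equation, perform the change of variable $\tau = \lambda s$ in the integral, and pull out the factor $\lambda K(z, s)$ using homogeneity; the surviving prefactor is $c^{m/(m+1)} \lambda^{2/(m+1)}$, and the equation closes exactly for $c = \lambda^{-2/m}$.

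The proposition then follows by scaling. Proposition \ref{prop:Existence} applied at $z_0 = 1$ produces a continuous solution $U_0$, which the strictly positive lower bound of Theorem \ref{thm:Solution}(iv) shows may be chosen nontrivial, so that $M_0 := \int_{-1}^0 U_0(z)\,dz > 0$. Applying the scaling principle with $\lambda = 1/z_0$ yields the canonical family
\[
U(z; z_0) := z_0^{2/m} U_0(z/z_0), \quad z \in [-z_0, 0], \quad z_0 > 0,
\]
whose half-mass, after the substitution $z = z_0 s$, equals
\[
\int_{-z_0}^0 U(z; z_0)\,dz = z_0^{1 + 2/m} M_0.
\]
Since $z_0 \mapsto M_0 z_0^{1 + 2/m}$ is a strictly increasing continuous bijection of $(0, \infty)$ onto $(0, \infty)$, the equation $M_0 z_0^{1+2/m} = 1/2$ admits the unique solution $z_0^* = (2 M_0)^{-m/(m+2)}$.

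The one subtlety deserving care is that the Schauder fixed point produced by Proposition \ref{prop:Existence} does not a priori exclude the trivial solution, so nontriviality of $U_0$ must be extracted from Theorem \ref{thm:Solution}(iv); once this is granted the rest of the argument is rigid, the exact power law $z_0^{1+2/m}$ forced by scaling delivering both existence and uniqueness of $z_0^*$ simultaneously.
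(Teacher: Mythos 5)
Your proof is correct but follows a genuinely different route from the paper. The paper applies the intermediate value theorem directly to $M(z_0)=\int_{-z_0}^0 U(z;z_0)\,dz$: the upper and lower bounds of Theorem \ref{thm:Solution}(iv) sandwich $M(z_0)$ between two constant multiples of $z_0^{1+2/m}$, forcing $M\to 0$ as $z_0\to 0^+$ and $M\to\infty$ as $z_0\to\infty$, and existence plus uniqueness of $z_0^*$ are then read off from continuity and monotonicity of $M$ --- both of which the paper asserts rather than proves. Your argument instead exploits the degree-one homogeneity $K(\lambda z,\lambda\tau)=\lambda K(z,\tau)$ (which is indeed immediate from \eqref{eqn:KDefinition} or \eqref{eqn:KExact}) to get the exact scaling law $U(z;z_0)=z_0^{2/m}U_0(z/z_0)$ and hence the identity $M(z_0)=M_0\,z_0^{1+2/m}$; your verification of the exponent $c=\lambda^{-2/m}$ checks out. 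This buys more than the paper's proof: it makes the continuity and strict monotonicity of $M$ exact rather than asserted, it explains the remark following Proposition \ref{prop:Mass} that $M(z_0)=O(z_0^{1+2/m})$ with an $\alpha$-independent rate, and it gives $z_0^*=(2M_0)^{-m/(m+2)}$ in closed form modulo the single constant $M_0$.

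Two caveats, both of which you share with the paper rather than introduce. First, your appeal to Theorem \ref{thm:Solution}(iv) to secure nontriviality of $U_0$ is slightly circular: that lower bound is stated \emph{conditionally} on $U_0$ being nontrivial, so it cannot by itself rule out that the Schauder fixed point of Proposition \ref{prop:Existence} is the zero function. The paper has exactly the same gap (its lower bound on $M$ also presupposes nontriviality), so you are on equal footing, but the honest statement is that both proofs take the existence of a nontrivial solution as given. Second, your canonical family $z_0^{2/m}U_0(z/z_0)$ is one consistent selection of solutions; since uniqueness of the nontrivial solution for fixed $z_0$ is not established anywhere in the paper, the phrase ``the solution $U(z;z_0)$'' is ambiguous in both treatments, and your scaling family is as legitimate a reading of it as the paper's.
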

\begin{proof}
	We will use the Darboux principle to show that the increasing function 
	\begin{equation}
		M(z_0) := \int_{-z_0}^0 U(z; z_0) dz,
	\end{equation}
	attains a value $1/2$ for some unique $z_0 = z_0^*$. First, by Theorem \ref{thm:Solution} we know that $U$ is bounded. Therefore, on the one hand for some constant $D^{(1)}_{\alpha,m}$, we have
	\begin{equation}
		M(z_0) \leq D^{(1)}_{\alpha, m} z_0^\frac{2}{m} \int_{-z_0}^0 dz = D^{(1)}_{\alpha, m} z_0^{1+\frac{2}{m}}.
	\end{equation}
	Hence, $M(z_0) \rightarrow 0$ when $z_0 \rightarrow 0^+$. On the other hand, using the lower bound with another constant $D^{(2)}_{\alpha,m}$
	\begin{equation}
		M(z_0) \geq D^{(2)}_{\alpha,m} z_0^{\frac{\alpha}{m}}\int_{-z_0}^0 (z+z_0)^\frac{2-\alpha}{m} dz = \frac{D^{(2)}_{\alpha,m} }{1+\frac{2-\alpha}{m}} z_0^{1+\frac{2}{m}},
	\end{equation}
	which forces that $M(z_0) \rightarrow \infty$ for $z_0 \rightarrow \infty$. Therefore, $M: [0,\infty) \mapsto [0, \infty)$ is a continuous function that reaches all values in its range. Therefore, from the Darboux principle there exists $z_0^*$ such that $M(z_0^*) = 1/2$. The uniqueness of $z_0^*$ follows from monotonicity. The proof is complete. 
\end{proof}

\begin{rem}
	The above proposition states a somewhat stronger result: $M(z_0) = O(z_0^{1+2/m})$ as $z_0\rightarrow\infty$. That is, the growth of the mass is independent of $\alpha$ and we know its precise rate.
\end{rem}

By all of the above calculations, we have proved the following result. 
\begin{thm}\label{eqn:Barenblatt}
	Let $U=U(z;z_0)$  be a nontrivial solution to \eqref{eqn:FixedPoint2} and let $t_0>0$ be arbitrary. For $z_0^*$ from Proposition \ref{prop:Mass} define 
	\begin{equation}\label{eqn:BarenblattSolution}
		u(x,t) = t^{-\frac{\alpha}{2+m}}
		\begin{cases}
			U(-|x| t^{-\frac{\alpha}{2+m}}; z_0^*), & |x| \leq z_0^* t^{\frac{\alpha}{2+m}}, \\
			0, & \text{otherwise}, \\
		\end{cases}
		\quad 
		x \in \mathbb{R}, \quad t \in [t_0, \infty),
	\end{equation}
	Then, for arbitrary $0<z_1<z_2<z_0^*$ with $0 < z_1 \leq |x| t^{-\alpha/(2+m)} \leq z_2 < z_0^*$ the function \eqref{eqn:BarenblattSolution} is the strong solution of \eqref{eqn:MainEq0} and satisfies the mass conservation condition \eqref{eqn:MassConservation} for all $x\in\mathbb{R}$.
\end{thm}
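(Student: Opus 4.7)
The plan is to reverse the derivation of Section 2.1. The self-similar ansatz transforms the PDE \eqref{eqn:MainEq0}, via the Riemann-Liouville reformulation of the Caputo derivative, into the ODE \eqref{eqn:SelfSimilarEq}, which is in turn equivalent, after integrating twice from $-z_0^*$ and applying Fubini, to the fixed-point equation \eqref{eqn:FixedPoint2}. Existence of a nontrivial $U(\cdot;z_0^*)$ with the correct mass is already granted by Proposition \ref{prop:Existence} combined with Proposition \ref{prop:Mass}, so what remains is to verify the mass conservation on $\mathbb{R}$ and to check that $u$, defined by \eqref{eqn:BarenblattSolution}, has enough regularity on the specified region to satisfy the PDE in the strong (pointwise) sense.

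For the mass, symmetry of \eqref{eqn:BarenblattSolution} in $|x|$ gives $\int_{-\infty}^\infty u(x,t)dx = 2\int_0^\infty u(x,t)dx$. Substituting $z = -x\,t^{-\alpha/(2+m)}$ kills all $t$-dependence (this is exactly why the condition $a=b$ was imposed) and reduces the right-hand side to $2\int_{-z_0^*}^0 U(z;z_0^*)dz$, which equals $1$ by the defining property of $z_0^*$ in Proposition \ref{prop:Mass}.

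For the strong-solution property, I fix $0<z_1<z_2<z_0^*$ and restrict attention to the set $\Omega := \{(x,t) : t \geq t_0,\ z_1 \leq |x|t^{-\alpha/(2+m)} \leq z_2\}$. On $\Omega$, Theorem \ref{thm:Solution} guarantees that $U$ is analytic in its argument on the compact interval $[-z_2,-z_1]$; hence $u$ is smooth in $(x,t)$ on $\Omega$, and the spatial derivatives transform according to \eqref{eqn:SpaceDerivative}. The handling of the case $x>0$ is identical thanks to the even symmetry built into the definition. To identify $\partial^\alpha_t u$ with the RL expression \eqref{eqn:RLSS}, I exploit the finite propagation encoded by the compact support: for each fixed $x\neq 0$, the slice $\tau\mapsto u(x,\tau)$ vanishes on a right neighborhood of $0$ because $|x|\tau^{-\alpha/(2+m)}>z_0^*$ for small $\tau$. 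Hence $u(x,0^+)=0$ in the a.e.\ sense required, and the Caputo and Riemann-Liouville derivatives coincide, so \eqref{eqn:RLSS} applies verbatim. Finally, differentiating \eqref{eqn:FixedPoint2} once (the boundary contribution vanishes via $K(z,z)=0$) returns \eqref{eqn:MainEqSSInt}, and differentiating once more recovers the self-similar equation \eqref{eqn:SelfSimilarEq}. Reading the self-similar reduction backwards then yields \eqref{eqn:MainEq0} pointwise on $\Omega$.

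The main obstacle is the Caputo-versus-RL reconciliation just sketched. Strictly speaking the full Cauchy problem carries a Dirac initial datum, and the cancelation of the boundary term that yields the equivalence of the two derivatives only makes sense because, for every $x\neq 0$, the solution constructed here is \emph{genuinely} zero for all small enough $t>0$; this is precisely the compact-support property of $U$ provided by Theorem \ref{thm:Solution}. Once this is secured, the rest is routine bookkeeping on the analytic region $\Omega$.
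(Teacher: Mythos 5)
Your proposal is correct and follows essentially the same route as the paper: differentiate \eqref{eqn:FixedPoint2} twice and reverse the self-similar reduction, using the local analyticity from Theorem \ref{thm:Solution} on the restricted region to justify the pointwise computations. In fact you supply two details the paper leaves implicit (the change of variables verifying mass conservation, and the Caputo/Riemann--Liouville reconciliation via the vanishing of $u(x,\tau)$ for small $\tau$ at fixed $x\neq 0$), both of which are handled correctly.
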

\begin{proof}
	Taking into account all the above reasoning, we have to prove that \eqref{eqn:BarenblattSolution} is the strong solution of \eqref{eqn:MainEq0}. Note that by Theorem \ref{thm:Solution} for any $x$ and $t$ in our region, the solution $u(x,t)$ is twice differentiable in space and at least once in time. Therefore, we can differentiate \eqref{eqn:FixedPoint2} and reverse the steps that lead to it from \eqref{eqn:MainEq0}. Hence, \eqref{eqn:BarenblattSolution} is the pointwise (strong) solution of our main equation. 
\end{proof}

\begin{rem}
	The above theorem states the result about a strong solution only in a portion of our domain $\mathbb{R} \times (0, \infty)$. As was noted in the Introduction, the full account of the theory should involve a suitable concept of weak solutions and uniqueness. Due to its complexity and participation, we leave this topic for future work. 
\end{rem}

\section{Numerical scheme}
In this section, we design a numerical scheme for obtaining Barenblatt solutions of \eqref{eqn:MainEq0}. According to Theorem \ref{thm:Solution} we need only to solve \eqref{eqn:FixedPoint2} for a chosen $z_0 = z_0^*$ such that the mass conservation \eqref{eqn:MassConservationHalf} is satisfied.

\subsection{Analysis of the scheme}
Introduce the uniform grid $z_n := -z_0 + n h$ for $n\in\mathbb{N}$ and $h = z_0/N$ with $N$ being the total number of interval divisions. Choose an open quadrature to discretize the integral in \eqref{eqn:FixedPoint2}, that is
\begin{equation}\label{eqn:Quadrature}
	\frac{m+1}{\Gamma(1-\alpha)}\int_{-z_0}^{z_n} K(z_n, \tau) U(\tau) d\tau = h \sum_{i=1}^{n-1} w_{n,i} U(z_i) + \delta_n(h),
\end{equation}
where $\delta_n(h)$ is the local discretization error and $w_{n,i}$ are weights of the quadrature. For example, using the product rectangle rule for a $\gamma$-H\"older function, we have
\begin{equation}\label{eqn:WeightsRectangle}
	w_{n,i} = \int_{z_{i}}^{z_{i+1}} K(z_n, \tau) d\tau, \quad \delta_n(h) = L(u) h^\gamma \left(\int_{-z_0}^{z_n} K(z_n, \tau) d\tau\right),
\end{equation}
where $L(u)$ is the H\"older constant of $u$. Choosing different quadratures, we can generate various other schemes (see, for example, \cite{okrasinska2022second}). Moreover, notice that we choose to approximate the solution by the rectangle rule leaving the kernel intact (hence, the 'product rule'). This improves accuracy in the weak-regularity setting. We note that since our solution is only H\"older continuous, we cannot hope that using the high-order scheme will provide more accurate approximations. It seems that the rectangle rule is a reasonable choice that balances the small amount of computational cost, ease of implementation, and sufficient accuracy given the weak regularity of the solution. We elaborate on this topic numerically in the next section. 

Therefore, if $U_n$ is the numerical approximation to the solution of \eqref{eqn:FixedPoint2}, we can devise a simple explicit scheme by truncating $\delta_n(h)$
\begin{equation}\label{eqn:NumericalScheme}
	U_n = \left(h \sum_{i=1}^{n-1} w_{n,i} U_i\right)^\frac{1}{m+1}.
\end{equation}
We also defined the global discretization error
\begin{equation}\label{eqn:GlobalDiscretisationError}
	\delta(h) := \max_n |\delta_n(h)|.
\end{equation}
In reality, the numerical approximation is a function of $z_0$, that is, $U_n = U_n(z_0)$, but for clarity of presentation, we omit this from the notation where justified. In the same way as in the continuous integral equation, the discrete one has a trivial solution $U_n \equiv 0$. To obtain a nontrivial one, we have to specify a positive starting value $U_1$ (since $U_0 = 0$). Then, due to monotonicity, the next values will be positive. Probably the most natural choice for the starting value is the \textit{exact} asymptotical value according to Theorem \ref{thm:Solution}, that is
\begin{equation}\label{eqn:InitialCondition}
	U_0 = 0, \quad U_1 = \left(\frac{\frac{(m+1) z_0^\alpha}{\Gamma(2-\alpha)} \left(\frac{\alpha}{2+m} \right)^{\alpha}}{1+\frac{2-\alpha}{m}}\right)^\frac{1}{m} h^\frac{2-\alpha}{m}.
\end{equation}
In this way, we introduce the discretization error of order $o(h^{(2-\alpha)/m})$ as $h\rightarrow 0$.

The integral condition \eqref{eqn:MassConservationHalf} is treated in the following way. Given the numerical approximation $\left\{U_n\right\}_n$ define the expected tolerance $TOL > 0$ and the function 
\begin{equation}\label{eqn:MassConservationDiscrete}
	F(z_0) = h \sum_{n = 1}^N v_{N,n} U_n - \frac{1}{2},
\end{equation}
where $v_{N,n}$ are the weights of a chosen quadrature (for example, trapezoid or Gauss). Starting with some given initial value $z_0 = z_0^{(0)}$, we apply a root-finding algorithm such as Newton or Brent. After acquiring the desired tolerance, we set $z_0 = z_0^*$ when $|F(z_0)| < TOL$. The corresponding solution of the integral equation \eqref{eqn:FixedPoint2} is the desired one. The overall procedure for obtaining a numerical approximation to our problem is presented in Alg. \ref{alg:NumericalScheme}. 

\begin{algorithm}
	\caption{Numerical solution of \eqref{eqn:FixedPoint2} with \eqref{eqn:MassConservationHalf}. }
	\label{alg:NumericalScheme}
	\begin{algorithmic}[1]
		\Require $z_0^{(0)} > 0$, $N > 0$, tolerance $TOL$
		\Ensure $z_0$ such that $|F(z_0)| < TOL$ where $F$ is defined in (\ref{eqn:MassConservationDiscrete})
		\State Compute weights $w_{n,i}$ from \eqref{eqn:Quadrature} and $v_{N,n}$ from \eqref{eqn:MassConservationDiscrete}
		\State Set $U_1$ according to equation \eqref{eqn:InitialCondition}
		\While{$|F(z_0)| \geq TOL$}
		\State Set step size $h \gets z_0 / N$
		\State Compute $U_n$ using \eqref{eqn:NumericalScheme} for $2\leq n \leq N$
		\State Update $z_0$ using the root-finding algorithm for $F$
		\EndWhile
		\State\Return $z_0$, $\left\{U_n(z_0)\right\}_n$
	\end{algorithmic}
\end{algorithm}

The main result of this section is the convergence proof of our scheme \eqref{eqn:NumericalScheme}.
\begin{thm}\label{thm:Covergence}
	Let $U=U(z)$ be the solution of \eqref{eqn:FixedPoint2}, while $U_n$ its numerical approximation at $z_n$ is calculated from \eqref{eqn:NumericalScheme}. Assume that the order of discretization of the starting step \eqref{eqn:InitialCondition} is greater than or equal to the order of the global discretization error $\delta(h)$ defined in \eqref{eqn:GlobalDiscretisationError}. Then, 
	\begin{equation}
		\max_n |U(z_n) - U_n| = O\left(\left(\ln \ln \frac{1}{h}\right) \left(\ln \frac{1}{h}\right)\delta(h) \right), \quad h\rightarrow 0^+.
	\end{equation} 
\end{thm}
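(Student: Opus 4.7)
The plan is to set up a discrete error equation, linearize the non-Lipschitz part via an inductive lower bound on $U_n$, then close the argument with a singular-kernel discrete Gronwall inequality. Define $e_n := U(z_n) - U_n$. Raising the integral equation \eqref{eqn:FixedPoint2} to the power $m+1$ at $z = z_n$ and applying the quadrature identity \eqref{eqn:Quadrature} yields the consistency relation $U(z_n)^{m+1} = h \sum_{i=1}^{n-1} w_{n,i} U(z_i) + \delta_n(h)$. Subtracting the scheme \eqref{eqn:NumericalScheme} raised to the $(m+1)$-th power gives
\begin{equation*}
U(z_n)^{m+1} - U_n^{m+1} = h \sum_{i=1}^{n-1} w_{n,i} e_i + \delta_n(h).
\end{equation*}
The crux is to extract $|e_n|$ from the left-hand side even though $x\mapsto x^{m+1}$ has vanishing derivative at zero, precisely where $U$ lives for $z$ near $-z_0$.

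To handle this, I would prove by induction on $n$ that the scheme preserves a lower bound analogous to the one of Theorem \ref{thm:Solution}, namely $U_n \geq c(nh)^{(2-\alpha)/m}$ for some constant $c>0$ independent of $h$. This uses the positivity of $K$ guaranteed by Proposition \ref{prop:Kernel}(i), the explicit starting value \eqref{eqn:InitialCondition} (which has precisely the correct asymptotic exponent $(2-\alpha)/m$), and a discrete adaptation of Lemma \ref{lem:Gronwall}. Once this lower bound is available, the mean value theorem gives $U(z_n)^{m+1} - U_n^{m+1} = (m+1)\xi_n^m \, e_n$ with $\xi_n \geq c(nh)^{(2-\alpha)/m}$, so
\begin{equation*}
|e_n| \leq \frac{1}{(m+1) c^m (nh)^{2-\alpha}} \Bigl(h\sum_{i=1}^{n-1} w_{n,i} |e_i| + |\delta_n(h)|\Bigr),
\end{equation*}
turning the nonlinear estimate into a linear discrete Volterra inequality with a weakly singular prefactor.

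The final step is a discrete Gronwall-type inequality adapted to weak singularities. Since the kernel sum $h\sum_i w_{n,i}$ is uniformly bounded (by Proposition \ref{prop:Kernel}), the standard Dixon--McKee iteration against the factor $(nh)^{-(2-\alpha)}$ produces a multiplicative amplification of order $\ln(1/h)$. The extra $\ln\ln(1/h)$ factor arises from the initial boundary layer where $nh \sim h$: there the MVT prefactor is of order $h^{-(2-\alpha)}$, too large to absorb directly, and one has to revert to the reverse-H\"older bound $|a^{1/(m+1)}-b^{1/(m+1)}|\leq |a-b|^{1/(m+1)}$ for a logarithmic-in-$h$ number of initial steps before the linearized estimate becomes effective; propagating this nonlinear estimate through that many steps yields the outer double log. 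The main obstacle is step two, the discrete lower bound $U_n \geq c(nh)^{(2-\alpha)/m}$: this is precisely where the hypothesis that the order of the starting value \eqref{eqn:InitialCondition} dominates $\delta(h)$ enters, since without a tight enough seed the induction cannot be started and the scheme could drift toward the trivial solution, destroying any quantitative rate.
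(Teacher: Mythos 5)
Your overall strategy is genuinely different from the paper's, and it contains gaps that, as written, would prevent it from delivering the stated bound. The paper does not attempt any lower bound on the discrete solution. Instead it perturbs \emph{both} the integral equation and the scheme by a small $\epsilon(h)\to 0$, so that the mean-value point $\xi$ in $|a^{m+1}-b^{m+1}|=(m+1)\xi^m|a-b|$ automatically satisfies $\xi\geq\epsilon(h)$ everywhere, including at the degenerate edge $z\to -z_0^+$. This turns the error relation into the \emph{classical} (non-singular) discrete Gronwall inequality $|e_n|\leq C\epsilon^{-m}(h\sum_{i<n}|e_i|+\delta(h))$, whose solution is $|e_n|\leq C\epsilon^{-m}\delta(h)e^{Cz_0\epsilon^{-m}}$; choosing $\epsilon(h)^{-m}=(Cz_0)^{-1}\ln\ln(1/h)$ makes the exponential equal to $\ln(1/h)$ and produces exactly the two logarithmic factors in the statement. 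The logs are thus an artifact of optimizing the regularization parameter, not of a boundary layer or of a singular kernel, which is where your accounting diverges from what is actually proved.

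Concretely, three steps of your plan are not secured. First, the discrete lower bound $U_n\geq c(nh)^{(2-\alpha)/m}$ with $c$ independent of $h$ is asserted via ``a discrete adaptation of Lemma \ref{lem:Gronwall}''; that lemma's proof differentiates an integral inequality and separates variables, and its discrete analogue for the weighted sums $h\sum w_{n,i}U_i$ is not routine -- this would be a substantial lemma in its own right, and without it your linearization has no floor. Second, the resulting prefactor $(nh)^{-(2-\alpha)}$ has exponent $2-\alpha>1$, so it is \emph{not} an integrable singularity and the Dixon--McKee iteration does not apply; in the bulk ($nh$ bounded away from $0$) the prefactor is simply bounded and standard Gronwall gives a constant, not a $\ln(1/h)$ amplification, so your claimed origin of the $\ln(1/h)$ factor is not correct. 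Third, in the boundary layer the reverse-H\"older bound gives $|e_n|=O(\delta(h)^{1/(m+1)})$, which is \emph{larger} than $\delta(h)$ and already violates the target estimate for those indices; to rescue this you would need a separate argument there (e.g., the trivial bound $|e_n|\leq U(z_n)+U_n=O((nh)^{(2-\alpha)/m})$ combined with a comparison of exponents against $\delta(h)$), which you do not supply. Your route might ultimately yield a comparable or even sharper estimate, but as it stands each of these gaps is load-bearing, whereas the paper's $\epsilon$-perturbation sidesteps all three at once.
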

\begin{proof}
	We begin by perturbing our equation and the scheme with, yet to be specified, small number $\epsilon=\epsilon(h)$ which vanishes as $h\rightarrow 0^+$
	\begin{equation}
		U(z) +\epsilon(h) = \left(\frac{m+1}{\Gamma(1-\alpha)}\int_{-z_0}^z K(z,\tau)U(\tau) d\tau\right)^\frac{1}{m+1}, \quad U_n + \epsilon(h) = \left(h \sum_{i=1}^{n-1} w_{n,i} U_i\right)^\frac{1}{m+1}.
	\end{equation} 
	The solutions of the above converge to the solution of unperturbed equations with $\epsilon(h) \rightarrow 0$. Define the error of the scheme by $e_n := U(z_n) - U_n$. Then, by taking the power and subtracting we obtain
	\begin{equation}
		\begin{split}
			(U(z) +\epsilon(h))^{m+1} - (U_n + \epsilon(h))^{m+1} 
			&= \frac{m+1}{\Gamma(1-\alpha)}\int_{-z_0}^z K(z,\tau)U(\tau) d\tau - h \sum_{i=1}^{n-1} w_{n,i} U_i \\
			&= h \sum_{i=1}^{n-1} w_{n,i} e_i + \delta_n(h),
		\end{split}
	\end{equation}
	where in the second equality we have used the definition of the discretization error \eqref{eqn:Quadrature}. Now, by the mean value theorem for any positive numbers $a$ and $b$ we have $|a^{m+1}-b^{m+1}| = (m+1) c^m |a-b|$ with $c$ between $a$ and $b$. Applying this to the above we obtain
	\begin{equation}
		(m+1) \xi^{m} |e_n| \leq h \sum_{i=1}^{n-1} |w_{n,i}| |e_i| + \delta(h),
	\end{equation}
	with where the discretization error was bounded by its global value. Here, $\xi$ lies between $U(z) +\epsilon(h)$ and $U_n + \epsilon(h)$, and hence we always have $\xi \geq \epsilon(h)$. Therefore,
	\begin{equation}
		|e_n| \leq \frac{\epsilon^{-m}}{m+1} h \sum_{i=1}^{n-1} |w_{n,i}| |e_i| + \frac{\epsilon^{-m}}{m+1}\delta(h).
	\end{equation}
	Now, since both the kernel $K$ and the solution $U$ are bounded, there exists a constant $C>0$ such that
	\begin{equation}
		|e_n| \leq C \epsilon^{-m} \left(h \sum_{i=1}^{n-1} |e_i| + \delta(h)\right).
	\end{equation}
	This brings us to the position of using the classical version of the discrete Gronwall inequality (see, for example, \cite{ames1997inequalities}) which, under our assumption that $|U(z_1) - U_1| = |e_1| = O(\delta(h))$ as $h\rightarrow 0$, yields
	\begin{equation}
		|e_n| \leq C \epsilon^{-m} \delta(h) e^{C n h \epsilon^{-m}} \leq C \epsilon^{-m} \delta(h) e^{C z_0 \epsilon^{-m}},
	\end{equation}
	where we used the fact that $n h = z_n + z_0 \leq z_0$. Now, we have to choose the precise form of the perturbation $\epsilon(h)$. There are infinitely many such choices that preserve the order of $\delta(h)$ (up to slowly varying factors) while still converging to zero, for example
	\begin{equation}
		\epsilon(h) = \left(\frac{1}{C z_0} \ln \ln \frac{1}{h}\right)^{-\frac{1}{m}}.
	\end{equation}
	Then,
	\begin{equation}
		|e_n| \leq \frac{1}{z_0} \left(\ln \ln \frac{1}{h}\right) \left(\ln \frac{1}{h}\right) \delta(h),
	\end{equation}
	which ends the proof by taking the maximum over $n$. 
\end{proof}

\begin{rem}
	Arbitrarily chosen logarithmic factors in the above result are not relevant in practical applications because of their very slow growth. Hence, we can conclude that the above theorem states that the scheme converges \textit{essentially} with the same order as the discretization error $\delta(h)$.
\end{rem}

\subsection{Numerical illustration}
We illustrate our numerical scheme with some examples. First, in Fig. \ref{fig:Barenblatt} we can see several solutions for fixed $m=1$ and varying $\alpha$. For our purposes, we use the rectangle rule \eqref{eqn:WeightsRectangle}. The half-size of the compact support $z_0$ was found with the bisection method with the trapezoidal rule for the quadrature of $U$. On average, the algorithm converged with a relative tolerance of $10^{-4}$ in less than 10 iterations. Note that due to only H\"older regularity of the solution, it is not advised to use higher-order methods such as Newton's. 

In the plot in Fig. \ref{fig:Barenblatt} we have depicted the even solution reflected through the origin for a clearer presentation. As we can see, the classical solution for $\alpha \rightarrow 1^-$ is a smooth function with vanishing derivative at $z = 0$. This corresponds to the exact formula \eqref{eqn:BarenblattExact} very accurately with a maximum error of $3\times 10^{-3}$. The situation is dramatically different for $0<\alpha <1$, where we observe a sharp cusp and a lack of differentiability. This was already observed in \cite{mainardi2001fundamental} for the linear subdiffusion equation and its fundamental solution (see also Remark \ref{rem:Symmetry}). 

\begin{figure}
	\centering
	\includegraphics[scale = 0.8]{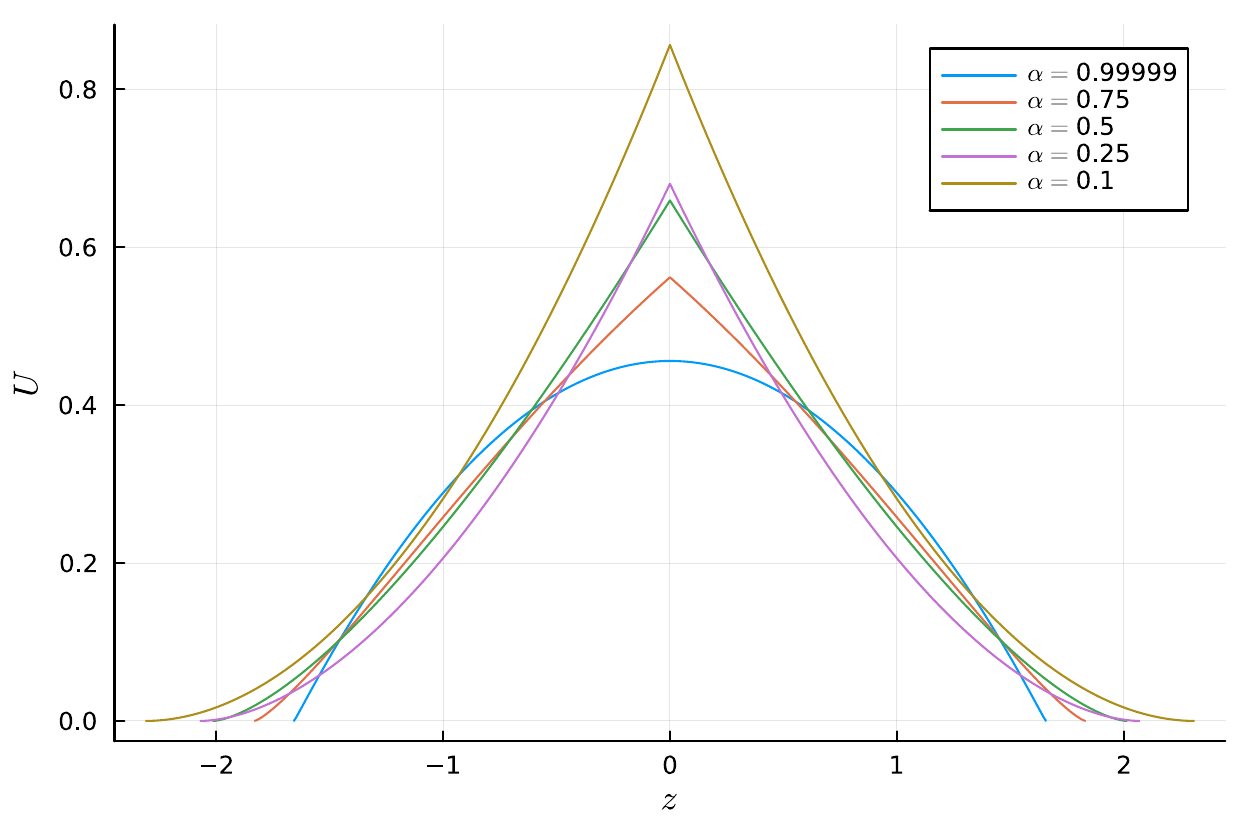}
	\caption{Solutions of \eqref{eqn:NumericalScheme} obtained with rectangle weights \eqref{eqn:WeightsRectangle} according the the Alg. \ref{alg:NumericalScheme}. Here, $m = 1$ and number of steps $N = 2^{10}$. }
	\label{fig:Barenblatt} 
\end{figure}

The dependence on $z_0$ of the solution $U=U(z; z_0)$ is depicted in Fig. \ref{fig:Z0Dependence}. It is clearly seen that $U(z, \cdot)$ increases with increasing $z_0$ (decreasing $-z_0$). Moreover, as can also be inferred from the graphs, the overall shape of the function resembles $C (|z|+z_0)^{(2-\alpha)/m}$ as was also suggested by the asymptotic behavior and lower bound found in Theorem \ref{thm:Solution}. For example, for fixed $m = 1.5$ we obtain $(2-\alpha)/m = 1.1(6) > 1$ (convex) for $\alpha = 0.25$ and $(2-\alpha)/m = 0.8(3)$ (concave) for $\alpha = 0.75$. 

\begin{figure}
	\centering
	\includegraphics[scale = 0.42]{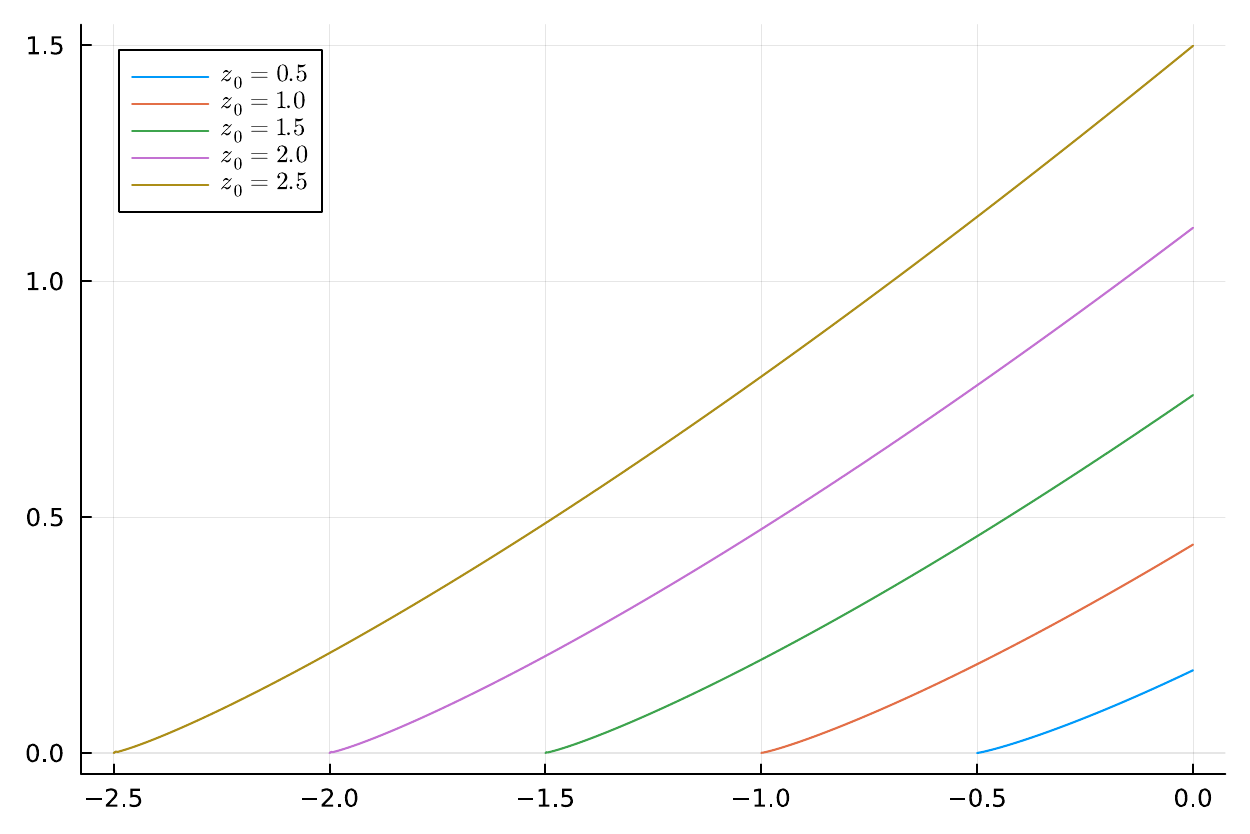}
	\includegraphics[scale = 0.42]{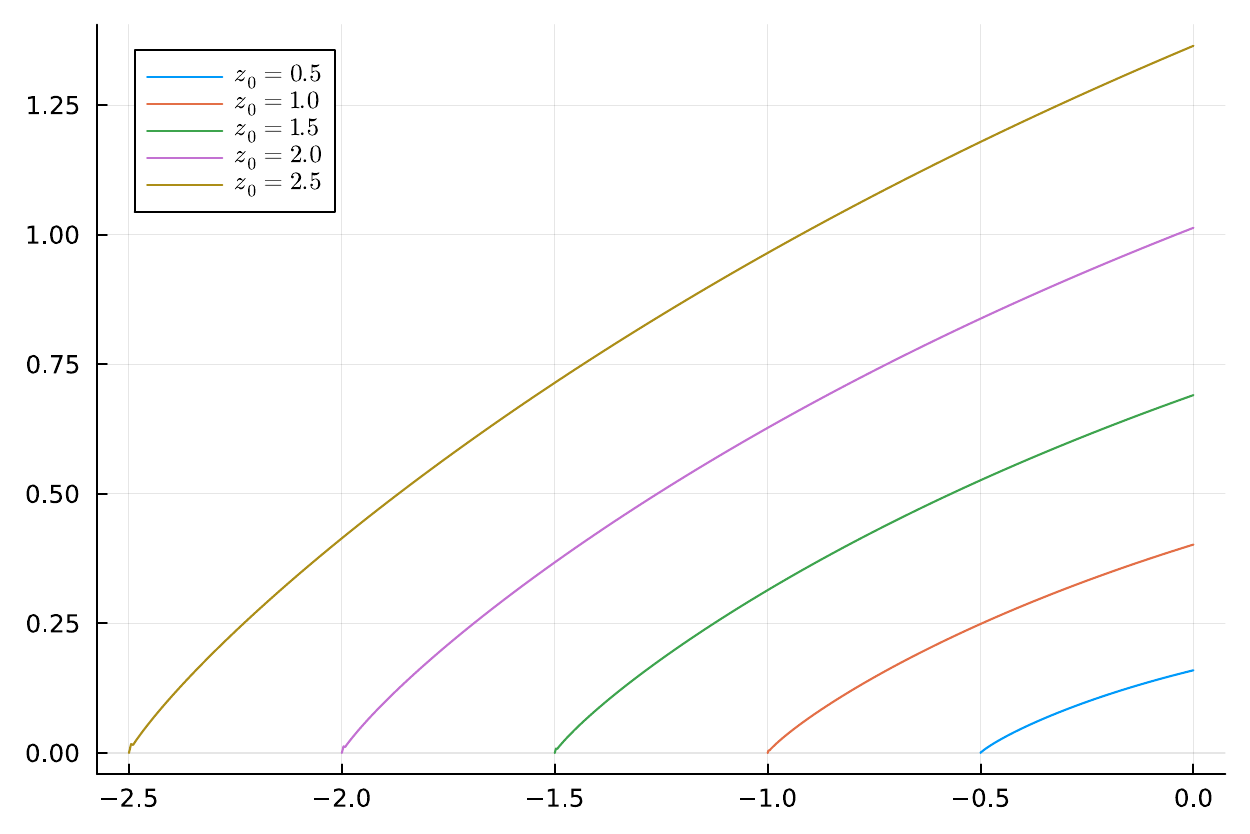}
	\caption{Solutions of \eqref{eqn:NumericalScheme} obtained with rectangle weights \eqref{eqn:WeightsRectangle} for difference $z_0>0$ and $\alpha = 0.25$ (left), $\alpha = 0.75$ (right). Here, $m = 1.5$ and number of steps $N = 2^{10}$. }
	\label{fig:Z0Dependence} 
\end{figure}

As a next test of our scheme, we compute the half-size of the compact support, that is $z_0$ for the mass conservation \eqref{eqn:MassConservationHalf} to be satisfied. We also compare it with the value for the classical case where the formula is available \eqref{eqn:BarenblattExact}. Results are presented in Tab. \ref{tab:CompactSupport}. Note that agreement with the theoretical values is high taking into account the lower regularity of the solution that influences the accuracy of both the main scheme \eqref{eqn:NumericalScheme} and the quadrature used to calculate its integral. We can conclude that regardless of these difficulties, our scheme is robust and can also tackle the classical case. 

\begin{table}
	\centering
	\begin{tabular}{cccccc}
		\toprule
		m & 1 & 3 & 5 & 7 & 9 \\
		\midrule
		$z_0$ exact & 1.650 & 1.159 & 0.961 & 0.856 &  0.791 \\
		$z_0$ num. & 1.654 & 1.101 & 0.961 & 0.769 & 0.740 \\
		\bottomrule
	\end{tabular}
	\caption{Theoretical values of $z_0$ obtained from the exact formula \eqref{eqn:BarenblattExact} for $\alpha = 1$ and its numerical approximations for various $m$. }
	\label{tab:CompactSupport}
\end{table}

The next test is the estimation of the order of convergence. Since we want to compute the error in the case $0<\alpha<1$, when the exact formulas are not available, we have to use an indirect method. One of such is to use an idea based on extrapolation. Let $U^{(N)}_n$ be the solution computed with \eqref{eqn:NumericalScheme} with the number of subdivisions equal to $N$. If it is large enough, we can expect that the proxy for the error estimate is
\begin{equation}
	\max_n|U^{(N)}_n - U_n^{(2N)}| \approx C h^p,
\end{equation}
where $p>0$ is the estimated order of convergence and the step is $h=z_0/N$. If we do the same computation for a doubled number of subdivisions, we obtain
\begin{equation}
	\max_n|U^{(2N)}_n - U_n^{(4N)}| \approx C \left(\frac{h}{2}\right)^p,
\end{equation}
where we assume that the error constant $C>0$ does not change substantially. Therefore, dividing the two above formulas and taking them, we obtain an estimate of the order
\begin{equation}
	p \approx \log_2 \frac{\max_n|U^{(N)}_n - U_n^{(2N)}|}{\max_n|U^{(2N)}_n - U_n^{(4N)}|}.
\end{equation}
This method allows us to find an approximate value of the order of convergence without a great computational cost. Another frequently used procedure is to find $U^{(N)}$ with a very large $N$ and treat it as an estimate of the exact solution (since we know the scheme converges). In our case, however, due to low regularity, this can be prohibitively expensive, and we use the extrapolation method.

Results of order estimation are presented in Tab. \ref{tab:Order} for various $\alpha$ and $m$. The base for the extrapolation was chosen as $N = 2^{11}$, which is a relatively large number. As can be immediately seen, the estimated order is slightly lower than $1$ and fairly independent of $\alpha$ which is a good feature since in many numerical schemes the global order deteriorates as $\alpha \rightarrow 0^+$ as the solution becomes less and less regular. This $\alpha$-robustness can be explained by the fact that our starting condition \eqref{eqn:InitialCondition} acts as a correction that incorporates the power-type nature of the solution. For higher $m$, the order approaches $1$. We know from Theorem \ref{thm:Solution} that our solution is actually smooth away from $z = -z_0$ and H\"older at that point. Therefore, we expect a higher order of convergence than predicted by \eqref{eqn:WeightsRectangle} which assumes global H\"older continuity. For smooth functions, this is exactly equal to $1$. However, the fact that our results show that the order is less than that may be explained by the logarithmic corrections in Theorem \ref{thm:Covergence} and that the extrapolation method is, in essence, only an approximation of the factual order. Nevertheless, these computations verify that the numerical scheme \eqref{eqn:NumericalScheme} is a robust and relatively accurate method for finding Bareblatt's solutions of the time-fractional porous medium equation. 

\begin{table}
	\centering
	\begin{tabular}{cccccc}
		\toprule
		$\alpha\backslash m$ & 1 & 3 & 5 & 7 & 9 \\
		\midrule
		0.999 & 0.69 & 0.73 & 0.74 & 0.75 & 0.75 \\
		0.9 & 0.71 & 0.82 & 0.87 & 0.9 & 0.92 \\
		0.5 & 0.74 & 0.86 & 0.9 & 0.92 & 0.94 \\ 
		0.2 & 0.75 & 0.87 & 0.9 & 0.92 & 0.93 \\ 
		0.01 & 0.75 & 0.86 & 0.9 & 0.92 & 0.93 \\
		\bottomrule
	\end{tabular}
	\caption{Estimate order of convergence of the numerical scheme \eqref{eqn:NumericalScheme} for various $\alpha$ (rows) and $m$ (columns). }
	\label{tab:Order}
\end{table}

\section{Conclusion}
We have found a Barenblatt solution of the time-fractional PME via the integral equation approach arising in the search for similarity solutions. This methodology enables us to derive several quantitative and qualitative properties of the solution, such as monotonicity, estimates, regularity, and mass conservation. Moreover, we devised a convergent numerical scheme to approximate the Barenblatt solution.

As we mentioned above, future work will concern proving that Barenblatt solution we have found using the integral equation approach is indeed a unique weak solution of \eqref{eqn:MainEq0} with measure initial data. In addition, an interesting direction of research is to generalize our results to higher spatial dimensions to provide a full account of the situation. However, this would require us to consider a different class of integral equations, and thus we leave it for our next topic. 

\section*{Acknowledgement}
\L.P. has been supported by the National Science Centre, Poland (NCN) under the grant Sonata Bis with a number NCN 2020/38/E/ST1/00153. J.C. and K. S. are partially supported by the project PID2023-148028NB-I00.


\end{document}